\theoremstyle{plain}
\newtheorem{thm}{Theorem}[section]
\newtheorem{lem}[thm]{Lemma}
\newtheorem{prop}[thm]{Proposition}
\newtheorem{cor}[thm]{Corollary}
\newtheorem{asspt}[thm]{Assumption}
\newtheorem{obstruction}{Obstruction}
\theoremstyle{remark}
\newtheorem{rem}[thm]{Remark}
\theoremstyle{definition}
\newtheorem{dfn}[thm]{Definition}
\newtheorem{example}[thm]{Example}
\newcommand{\bc}{\mathbb{C}}
\newcommand{\bz}{\mathbb{Z}}
\newcommand{\PP}{\mathbb{P}}
\newcommand{\CC}{\mathbb{C}}
\newcommand{\bt}{\mathbb{T}}
\newcommand{\g}{\gamma}
\newcommand{\aaa}{\alpha}
\newcommand{\bbb}{\beta}
\DeclareMathOperator{\mat}{Mat}
\DeclareMathOperator{\Hom}{Hom}
\DeclareMathOperator{\rk}{Rank}
\DeclareMathOperator{\crk}{Corank}
\DeclareMathOperator{\lcm}{lcm}
\DeclareMathOperator{\fox}{Fox}
\DeclareMathOperator{\Char}{\operatorname{Char}}
\DeclareMathOperator{\ab}{\textbf{ab}}
\numberwithin{equation}{section}
\newcommand\enet[1]{\renewcommand\theenumi{#1}
\renewcommand\labelenumi{\theenumi}}
\title[Characteristic varieties of graph manifolds]
{Characteristic varieties of graph manifolds and quasi-projectivity of fundamental groups of algebraic links}
\author[E.~Artal]{Enrique Artal Bartolo}
\author[J.I.~Cogolludo]{Jos\'e Ignacio Cogolludo-Agust{\'\i}n}
\address{Departamento de Matem\'aticas, IUMA, Facultad de Ciencias\\
Universidad de Zaragoza\\
c/ Pedro Cerbuna 12\\
E-50009 Zaragoza SPAIN}
\email{artal@unizar.es,jicogo@unizar.es}
\author[D.~Matei]{Daniel Matei}
\address{Institute of Mathematics of the Romanian Academy\\
P.O. Box 1-764\\ RO-014700 Buch\-arest\\ Romania}
\email{Daniel.Matei@imar.ro
}
\dedicatory{To Papadima who has been a true inspiration in our research.}
\thanks{The first two named authors are partially supported by
MTM2016-76868-C2-2-P and Grupo ``\'Algebra y Geometr{\'\i}a'' of
Gobierno de Arag\'on/Fondo Social Europeo.
The third named author was partially supported by the Romanian Ministry of National Education,
CNCS-UEFISCDI, grant PNII-ID-PCE-2012-4-0156 and FMI 53/10 (Gobierno de Arag{\'o}n).}
\begin{document}

\begin{abstract}
\hspace*{-1pt} The present paper studies the structure of characteristic varieties of fundamental groups of
graph manifolds. As a consequence, a simple proof solving a question posed by Papadima on the 
characterization of algebraic links that have quasi-projective fundamental groups is provided. The type 
of quasi-projective obstructions used here are in the spirit of Papadima's original work.
\end{abstract}

\maketitle

In \cite{P07} \c{S}tefan Papadima studied the difference between global and local
fundamental groups. A \emph{global group} is the  fundamental group of a smooth quasi-projective variety
and it is also known in the literature as a \emph{quasi-projective group}.
A \emph{local group} is the fundamental group of a \emph{small} representative
of $X\setminus Y$, where $(Y,0)\subset (X,0)$ are germs of analytic isolated
singularities at $0\in\mathbb{C}^N$. The main question, as stated in the cited report~\cite{P07},
is to decide when a local group is global.

As a consequence of Zariski--Lefschetz theory (and its local version by Hamm--L{\^e}~\cite{hl:cras})
it is enough to restrict the attention to smooth quasi-projective surfaces (for global groups)
and complement of (eventually empty) curves in normal surface singularities. 
Papadima's question started with the \emph{simplest}
local groups, i.e., the fundamental groups of algebraic links in~$\mathbb{S}^3$.

There is a straightforward partial answer: consider a quasi-homogeneous singularity, that is, the set of 
zeroes $V(F)\subset \CC^N$ of a quasi-homogeneous polynomial $F$ in $N$ variables. Then the local group of the 
quasi-homogeneous singularity $(V(F),0)\subset (\CC^N,0)$ is a global group, since $V(F)$ is a \emph{small} 
representative and $\CC^N\setminus V(F)=\PP^N\setminus (V(F)\cup H)$, where $H$ is the hyperplane at infinity.
In \cite{P07}, Papadima proves that the local group is not global for \emph{almost} all algebraic links
of more that two irreducible components which cannot be realized by quasi-homogeneous equations.
The proof uses the particular structure of the characteristic varieties of quasi-projective groups. 
Later on, more complete answers have been provided. In~\cite{FS:14} Friedl and Suciu showed that prime 
components of closed $3$-manifolds (or link complements) with quasi-projective fundamental groups are 
graph manifolds~\cite{wal:67b}; Biswas and Mj~\cite{BM:15} characterized quasi-projective fundamental 
groups of compact $3$-manifolds. While these works answer Papadima's question we want to give an 
alternative proof using techniques similar to those originally used by Papadima, namely the use of quasi-projectivity
obstructions coming from characteristic varieties.

The goal of this paper is two-fold: first, a method to calculate fundamental groups and to compute characteristic 
varieties of graph $3$-manifolds is developed in section~\ref{sec-graph} and second, this method will be used in 
section~\ref{sec:main} to complete Papadima's proof of Theorem~\ref{thm:main}: \emph{the fundamental group of an 
algebraic link in $\mathbb{S}^3$ is global if and only it comes from a germ of plane
curve singularity having the topological type of a quasihomogeneous curve}.
In section~\ref{sec:settings} a general introduction is given on both algebraic links and characteristic varieties.
It is worth mentioning that the proof of the main theorem is constructive in the sense that it allows one to 
effectively detect the obstruction of a given local group to be global. In section~\ref{sec:examples} several 
examples are discussed in order to visualize the techniques used in the proof of the main theorem.
This in an example of how Papadima's work has influenced our trajectory.

The key object behind the proof in~\cite{P07} is a generalization of the Alexander polynomial known as 
characteristic varieties. Characteristic varieties are subvarieties of the complex torus $\mathbb{T}_G$ whose 
dimension is the rank~$b_1(G)$ of $G$. They provide a stratification of the space of characters of $G$.
They can be defined as the jump loci of the homology of $G$ with local coefficients
$$\Char_k(G):={\xi \in \mathbb{T}_G \mid \dim H_1(G;\underline{\CC}_\xi) \geq k}.$$

The idea of the proof in~\cite{P07} is based on the following fact~\cite{DPS4}:
\begin{obstruction}
\em Let $\Sigma_1,\Sigma_2$ two distinct irreducible components of $\Char_k(G)$, 
the $k$-th characteristic variety of a quasi-projective group $G$.
Then $\Sigma_1\cap\Sigma_2$ is finite. 
\end{obstruction}
If~$G_K$ is the fundamental group of an algebraic link with $r$~components ($b_1(G_K)=r$), 
the zero locus of the multi-variable Alexander polynomial $\Delta_K(t_1,\dots,t_r)\!\in\!\mathbb{Z}[t_1^{\pm 1},\dots,t_r^{\pm 1}]$
coincides with the codimension-$1$ part of $\Char_1(G)$; a formula for $\Delta_K$ is found in~\cite{en}.
If $r>1$ and $\Delta_K$ has more than one \emph{essential} variable, then there are non-disjoint hypersurfaces
which intersect at a codimension-$2$ subvariety, which is infinite if $r>2$. The aforementioned obstruction works only for the 
algebraic links~$K$ (with at least three components) such that the geometric decomposition of $(\mathbb{S}^3,K)$
has at least two Seifert pieces containing components of~$K$ (see disccussion in section~\ref{sec:charvar}). 

In this work, we will apply further obstructions based on~\cite{ACM-quasi-projective,DPS4}, to provide a complete answer
for complements of non-empty curves in normal surface singularities, which includes the case of algebraic links. 
We will make use of another obstruction (an immediate consequence of~\cite[Proposition~6.5(3)]{ACM-quasi-projective}):
\begin{obstruction}
\em Assume $G$ is a quasi-projective group, $\Sigma_1$ is an irreducible component of $\Char_k(G)$, 
and $\Sigma_2$ is an irreducible component of $\Char_{k+1}(G)$ such that $\Sigma_2\subsetneqq\Sigma_1$,
then $\Sigma_2$ is a point. 
\end{obstruction}
A direct application of this obstruction allows one to cover some algebraic links with two components
but it is not enough to give a complete answer. 
Nevertheless, we are going to prove in this work that the group of any algebraic link~$K$
(which is not topologically equivalent to a link given by quasihomogeneous equations)
admits a finite index subgroup $H$ which does not pass the above obstruction.
Since quasi-projectivity is inherited by finite-index subgroups~\cite{se:95,gr:57,gr:58},
the result will follow.

These techniques will apply for more general local groups.
In order to establish the result we will study the characteristic varieties of graph manifolds~\cite{wal:67b}.
The plumbing construction of W.~Neumann~\cite{neu:81} and the presentation of fundamental groups for these 
manifolds~\cite{mum:61,eh:98,Cohen-Suciu-boundary} will be particularly useful.

\section{Settings}
\label{sec:settings}

\subsection{Algebraic links}\label{sec-links}
\mbox{}

Let $f:(\mathbb{C}^2,0)\to(\mathbb{C},0)$ be a germ of an analytic function defining an isolated singularity.
By Milnor theory~\cite{mil}, there exists $\varepsilon_0>0$ for which $f$ is defined on the Euclidean ball
$\mathbb{B}^4_{\varepsilon_0}\subset\mathbb{C}^2$ (centered at~$0$) of radius $\varepsilon_0$ such that
$C:=f^{-1}(0)$ is transversal to $\mathbb{S}^3_{\varepsilon}$ $\forall\varepsilon\in(0,\varepsilon_0)$.
As a consequence $K_{\varepsilon}:=C\cap\mathbb{S}^3_{\varepsilon}$ is a compact oriented manifold of
dimension~$1$ and hence $(\mathbb{S}^3_{\varepsilon},K_{\varepsilon})$ is a link.

It is known that the topological type of the pair $(\mathbb{S}^3_{\varepsilon},K_{\varepsilon})$
does not depend on~$\varepsilon$ and it will be denoted as $(\mathbb{S}^3,K)$. It is a link with~$r$
connected components, where~$r$ is the number of irreducible factors of~$f\in\mathbb{C}\{x,y\}$.
Moreover, the pair $(\mathbb{B}^4_{\varepsilon_0},C)$ is homemorphic to the open cone over
$(\mathbb{S}^3,K)$. In particular, the local fundamental group associated to the complement of
the zero locus of~$f$ is isomorphic to $\pi_1(\mathbb{S}^3\setminus K)=:G_K$.

\begin{example}
Let $f(x,y):=x^\alpha y^\beta (x^m-y^n)$, where $\alpha,\beta\in\{0,1\}$ and $m,n\in\mathbb{N}$.
The singularity defined by~$f$ is quasihomogeneous and it defines an algebraic link
of $\alpha+\beta+d$ components, where $d:=\gcd(m,n)$. For this singularity the radius $\varepsilon$
can be chosen to be~$\infty$ and then $\pi_1(\mathbb{C}^2\setminus C)\cong G_K$, i.e., the local
group is global.
\end{example}
 
The main tool to study the local singularities is the embedded resolution, i.e.,
a proper holomorphic map $\pi:(X,D)\to(\mathbb{C}^2,0)$, where $X$ is a smooth surface,
$D=\pi^{-1}(0)$ and $\pi^{-1}(C)$ is a normal crossing divisor. There exists a unique
minimal embedded resolution. This information is usually encoded in the dual graph~$\Gamma$
of $\pi^{-1}(C)$. In this graph, each irreducible component of~$D$ is represented by
a vertex~$v$ and denoted $D_v$, each irreducible component of the strict transform of~$C$ is represented
by an arrowhead and each ordinary double point of $\pi^{-1}(C)$ is represented by an edge
whose extremities correspond to the irreducible components of $\pi^{-1}(C)$ containing the
branches. 

The vertices $v$ (not the arrowheads) of $\Gamma$ are weighted by $e_v:=D_v^2=(D_v, D_v)_X$ the 
self-intersection numbers of each divisor $D_v$. 
The incidence matrix $A$ of the dual graph of~$D$
(with the weights in the diagonal) is a negative definite matrix. The resolution is minimal
if and only if the ($-1$)-vertices of~$\Gamma$ have degree at least~$3$. The vertices admit
another weight, the multiplicities $m_{v,f}$. They can be defined as the valuation of $f\circ\pi$
at~$D_v$. We denote also by~$b_v$ the number of arrowheads neighboring~$v$. These numbers
are related by the following equality:

\begin{equation}\label{eq-euler-mult}
A\cdot (m_{v,f})_{v\in \Gamma}+(b_v)_{v\in \Gamma}=\mathbf 0
\end{equation}

For the multiplicities, if $f=f_1\cdot\ldots\cdot f_r$ is the decomposition in irreducible factors,
it is also useful to consider the tuple $m_v=(m_{v,f_1},\dots,m_{v,f_r})$ for the multiplicities of each branch
(satisfying $m_{v,f}=m_{v,f_1}+\dots+m_{v,f_r}$).

The dual graph also represents the plumbing graph of the algebraic link, see~\cite{neu:81}. Each vertex represents 
an oriented $\mathbb{S}^1$-fiber bundle over $\mathbb{S}^2$ with Euler number the self-intersection and 
each edge joining two vertices represents the gluing of the two corresponding fiber bundles after emptying solid 
tori and interchanging sections and fibers.
The arrows represent fibers in the attached vertices and form the link~$K$. 

\begin{example}\label{ex-acampo0}
Let us consider $f(x,y)=(y-x^2)(y^2-x^5)$.
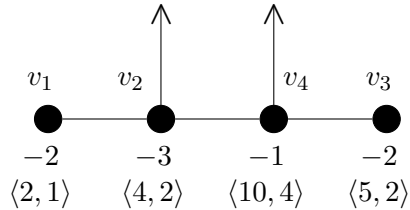
\begin{figure}[ht]
\begin{center}
\begin{tikzpicture}
\node[draw,circle,fill] (v0) at (-1.5,0) {};
\node[draw,circle,fill] (v1) at (0,0) {};
\node[draw,circle,fill] (v2) at (1.5,0) {};
\node[draw,circle,fill] (v3) at (3,0) {};
\node[rotate=90](flecha1) at (0.0025,1.4) {$\boldsymbol{>}$};
\node[rotate=90](flecha1) at (1.5025,1.4) {$\boldsymbol{>}$};
\draw (v0) -- (v1);
\draw (v1) -- (v2);
\draw (v2) -- (v3);
\draw (0,0) -- (0,1.5);
\draw (1.5,0) -- (1.5,1.5);
\node at (-1.6,.5) {$v_1$};
\node at (-1.6,-.5) {$-2$};
\node at (-1.6,-1) {$\langle 2,1\rangle$};
\node at (-.4,.5) {$v_2$};
\node at (-0.1,-.5) {$-3$};
\node at (-0.1,-1) {$\langle 4,2\rangle$};
\node at (2.9,.5) {$v_3$};
\node at (2.9,-.5) {$-2$};
\node at (2.9,-1) {$\langle 5,2\rangle$};
\node at (1.8,.5) {$v_4$};
\node at (1.4,-.5) {$-1$};
\node at (1.4,-1) {$\langle 10,4\rangle$};
\end{tikzpicture}
\caption{Dual Graph}
\label{fig:acampo0}
\end{center}
\end{figure}
The dual graph of the singularity is in Figure~\ref{fig:acampo0}; for each vertex the first weight is $e_i$ and the second
one is the pair of multiplicities for each branch (the first one for~$y^2-x^5$ and the second one for $y-x^2$).
Using the Seifert--van Kampen Theorem it will be proved in Example~\ref{ex-grupo-acampo0} that
\begin{equation}\label{pres:acampo0}
G_K=\langle
\g_{v_1},\g_{v_3}\mid [\g_{v_1}^2,\g_{v_3}^2]=1
\rangle.
\end{equation}
The generators $\g_{v}$ represent fibers for the fibration associated with the vertex~$v$.
\end{example}

\begin{dfn}
A vertex~$v\in \Gamma$ is a \emph{branching vertex} if its degree~$\rho(v)$ in the graph (including the arrows) 
is at least~$3$. A branching vertex is said to be \emph{essential} if it neighbors an arrow. 
\end{dfn}

\begin{prop}
An algebraic link can be realized by a quasihomogeneous equation if and only if there is at most one branching
vertex.
\end{prop}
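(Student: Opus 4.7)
The plan is to prove both implications by direct analysis of the minimal embedded resolution of $f$.

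For the ``only if'' direction I would take $f = x^\alpha y^\beta (x^m - y^n)$ and resolve it using the Hirzebruch--Jung/toric procedure associated to the pair $(m,n)$, as described in~\cite{en,neu:81}. Setting $d := \gcd(m,n)$ and $m = dm'$, $n = dn'$, the continued fraction expansion of $m'/n'$ produces a chain of exceptional rational curves with a distinguished central vertex $v_0$ of $f$-multiplicity $\lcm(m,n)$. The $d$ irreducible factors of $x^m - y^n$ have strict transforms meeting $v_0$ transversely at $d$ distinct points, contributing $d$ arrowheads at $v_0$. The axes, if present (i.e.~$\alpha=1$ or $\beta=1$), contribute arrowheads at the two extremities of the chain, possibly after one additional blow-up. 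The resulting graph is a star centered at $v_0$, so $v_0$ is its only branching vertex.

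For the ``if'' direction, assume the minimal embedded resolution graph $\Gamma$ has at most one branching vertex. If $\Gamma$ has no branching vertex, it is a linear chain with at most two arrowheads at its extremities; such a chain is the resolution graph of a torus knot $\{x^m - y^n = 0\}$ with $\gcd(m,n) = 1$, possibly combined with axis factors, all of which are quasihomogeneous. If $\Gamma$ has a unique branching vertex $v_0$, it is a star with arms that are chains of rational curves, each terminating either in an arrowhead or as a ``dead arm''. Using relation~\eqref{eq-euler-mult} together with the continued fraction structure of each arm, I would recover the multiplicities $m_{v_0, f_i}$ of the branches of $f$ at $v_0$ and the numerical data $(m, n, \alpha, \beta)$ of a candidate quasihomogeneous equation $f_0 := x^\alpha y^\beta(x^m - y^n)$. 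By the ``only if'' direction, $f_0$ resolves to a star graph with the same invariants as $\Gamma$, hence $f$ and $f_0$ are topologically equivalent.

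The main obstacle is the backward direction: one must verify that the numerical data carried by the arms of $\Gamma$ always correspond to a genuinely quasihomogeneous equation. The key ingredients are the negative definiteness of the intersection matrix and equation~\eqref{eq-euler-mult}, which together constrain the continued fraction expansions of the arms to match those produced by a coprime pair $(m',n')$; the absence of further branching vertices then rules out any higher-level Puiseux satellization and forces the link to be of Seifert type, which among plane curve germs characterizes exactly the quasihomogeneous singularities.
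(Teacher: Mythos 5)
Your ``only if'' half is essentially the computation the paper dismisses as ``easily seen'' and is fine, up to one small point you should make explicit: every reduced quasihomogeneous germ is, after a change of coordinates and up to topological equivalence, of the standard form $x^\alpha y^\beta(x^m-y^n)$, which is why it suffices to resolve that model. The genuine gap is in the converse. The step you yourself call ``the main obstacle'' --- that the numerical data of a graph with a single branching vertex always come from a coprime pair $(m',n')$, i.e.\ from an actual equation $x^\alpha y^\beta(x^m-y^n)$ --- is precisely the content of the proposition, and you do not prove it: negative definiteness and \eqref{eq-euler-mult} are named as ``key ingredients'', but no argument is given that they force the two continued-fraction arms, together with the possible placement of arrowheads along the arms (they need not all sit at the central vertex or at the extremities), to match those of a quasihomogeneous model. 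Your fallback, ``no further branching $\Rightarrow$ Seifert link $\Rightarrow$ quasihomogeneous among plane curve germs'', invokes the characterization of Seifert algebraic links as exactly the quasihomogeneous ones; that theorem is at least as strong as the statement being proved, so citing it without proof or a precise reference essentially restates the claim. The paper closes the same gap more economically: for irreducible germs it reduces to the classical fact that having one Puiseux pair (equivalently, one branching vertex in the chain-shaped graph of Figure~\ref{fig:Puiseux}) is equivalent to admitting a quasihomogeneous equation, and for reducible germs it runs a short case analysis (no inner branching vertex gives the ordinary double point and the $\mathbb{A}$, $\mathbb{D}$ types; an inner branching vertex gives a one-Puiseux-pair singularity together with its extra smooth, axis-type branches). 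Your plan of reconstructing $(m,n,\alpha,\beta)$ and comparing resolutions could be completed along those lines, but as written it is a programme rather than a proof.

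There is also a secondary slip in the no-branching-vertex case: you assert that a linear chain with arrowheads only at its extremities is the resolution graph of a torus knot $x^m-y^n$ with $\gcd(m,n)=1$, possibly with axes. For $m,n\geq 2$ the minimal graph of such a knot has a central $(-1)$-vertex carrying the arrow, hence of degree three --- a branching vertex. Since in a minimal embedded resolution every $(-1)$-vertex has degree at least three, a graph with no branching vertex can only correspond to the unknot or the Hopf link, exactly as the remark following the proposition states. Your conclusion (quasihomogeneity) still holds for those two cases, but the misclassification is a sign that the combinatorial case analysis in this direction needs to be carried out with more care.
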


\begin{proof}
The ($\Rightarrow$) part is easily seen. 

To prove the converse let us start with the simplest case of an algebraic knot. The topology of an
algebraic knot is defined by its Puiseux pairs, see e.g.~\cite{brk:86}. The number of Puiseux pairs determine 
the shape of the minimal graph of the embedded resolution (equivalently, the plumbing
graph of the link), see Figure~\ref{fig:Puiseux}.
\begin{figure}[ht]
\begin{center}
\begin{tikzpicture}[yscale=.5,vertice/.style={draw,circle,fill,minimum size=0.2cm,inner sep=0}]
\node[vertice] at (-3,0) {};
\node[vertice] at (0,0) {};
\node[vertice] at (0,-3) {};
\node[vertice] at (3,0) {};
\node[vertice] at (3,-3) {};
\node[vertice] at (6,0) {};
\node[vertice] at (6,-3) {};
\draw (-3,0)--(-2.5,0);
\draw[dashed] (-2.5,0)--(-.5,0);
\draw (-.5,0)--(0,0)--(0,-.75);
\draw (0,-2.25)--(0,-3);
\draw[dashed] (0,-.75)--(0,-2.25);
\draw (0,0)--(.5,0);
\draw[dashed] (.5,0)--(2.5,0);
\draw (2.5,0)--(3,0)--(3,-.75);
\draw (3,-2.25)--(3,-3);
\draw[dashed] (3,-.75)--(3,-2.25);
\draw (3,0)--(3.5,0);
\draw[dashed] (3.5,0)--(5.5,0);
\draw (5.5,0)--(6,0)--(6,-.75);
\draw (6,-2.25)--(6,-3);
\draw[dashed] (6,-.75)--(6,-2.25);
\draw[-{[scale=2]>}] (6,0)--(7,0);
\end{tikzpicture}
\caption{Dual graph of an irreducible germ of plane curve singularity}
\label{fig:Puiseux}
\end{center}
\end{figure}
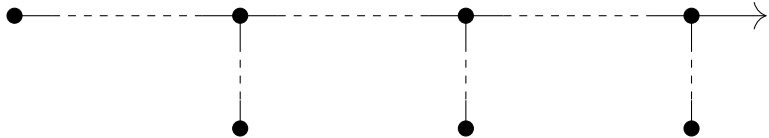
It is well known that the curve has exactly one Puiseux pair (i.e., one branching point) if and only if the singularity can 
be realized by a quasihomogeneous equation.

In the general reducible case, the graph without the arrowheads must be linear, and it may have arrows at the extremities, 
and at most one arrow attached to an inner vertex. If there is no inner arrow, there are two possibilities. 
For the first one, there is at most one arrow at each extremity; by minimality we should have arrows at the extremities 
and only one vertex, i.e., an ordinary double point which is quasihomogeneous. For the second one, there are two arrows at 
one of the extremities, and at most one at the other one. By minimality, the first extremity has self-intersection~$-1$ and 
the rest of them are of self-intersection~$-2$. It is easily seen that those singularities
are topologically equivalent to $\mathbb{A},\mathbb{D}$ singularities.
If there is an inner vertex, this one corresponds to one Puiseux pair singularity and the result follows.
\end{proof}

\begin{rem}
Only the trivial knot (smooth branch) and the Hopf link (ordinary double point) have no branching vertex.
\end{rem}

\begin{rem}
The algebraic links for which $G_K$ has been found to be non quasi-projective in~\cite{P07} are those with
at least three connected components, at least two branching vertices and one essential vertex.
\end{rem}

One of the main invariants of an algebraic link is its Alexander polynomial which can be computed using
A'Campo's formula for the zeta function of the monodromy.

\begin{prop}[\cite{ac:75}]
The Alexander polynomial $\Delta_K(t)$ equals
$$
(t-1)\prod_{v\in \Gamma} (t^{m_v}-1)^{\rho(v)-2}.
$$ 
\end{prop}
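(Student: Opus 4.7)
The plan is to derive the formula as a direct consequence of A'Campo's theorem on the zeta function of the monodromy, using that algebraic links are Milnor fibered.

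First I would recall A'Campo's formula. If $\pi: X \to \mathbb{C}^2$ is an embedded resolution of $f$ and $h$ denotes the monodromy of the Milnor fibration of $f$, then
\[
\zeta_h(t) = \prod_{E} (1-t^{m_E})^{\chi(E^\circ)},
\]
the product being taken over all irreducible components $E$ of the total transform $\pi^{-1}(f^{-1}(0))$, where $m_E$ is the multiplicity of $f\circ\pi$ along $E$ and $E^\circ$ is $E$ with its intersections with the rest of the total transform removed. The components $E$ come in two kinds: the exceptional divisors $D_v$ indexed by vertices of $\Gamma$, and the (local) components of the strict transform of $C$ indexed by the arrowheads. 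For a strict transform component $E^\circ$ is an open disk punctured at one point, hence $\chi(E^\circ)=0$ and such factors drop out of the product. For an exceptional divisor, $D_v\simeq\mathbb{P}^1$ with $\rho(v)$ points removed (counting arrowheads), so $\chi(D_v^\circ)=2-\rho(v)$. Therefore
\[
\zeta_h(t) = \prod_{v\in\Gamma}(1-t^{m_v})^{2-\rho(v)}.
\]

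Next I would link the zeta function to the Alexander polynomial. Since the Milnor fiber $F$ is connected and has the homotopy type of a bouquet of circles, $H_0(F)=\mathbb{C}$ (with trivial monodromy) and $H_i(F)=0$ for $i\geq 2$, so
\[
\zeta_h(t) = \frac{1-t}{\det\bigl(I-t\,h_*|_{H_1(F)}\bigr)}.
\]
Because every algebraic link is fibered (Milnor), the Alexander polynomial equals the characteristic polynomial of the monodromy on $H_1(F)$ up to a unit of $\mathbb{Z}[t^{\pm1}]$. Hence
\[
\Delta_K(t) \;\doteq\; (1-t)\,\zeta_h(t)^{-1} \;=\; (1-t)\prod_{v\in\Gamma}(1-t^{m_v})^{\rho(v)-2},
\]
where $\doteq$ denotes equality up to units. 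Rewriting $(1-t^{m_v})=-(t^{m_v}-1)$ and tracking signs through $\sum_v(\rho(v)-2)$ (which by the handshake lemma on the tree $\Gamma$ equals $r-2$, where $r$ is the number of arrowheads) absorbs all the $(-1)$'s into a unit, giving the stated form $(t-1)\prod_v(t^{m_v}-1)^{\rho(v)-2}$.

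For the multivariable statement, where $m_v=(m_{v,f_1},\dots,m_{v,f_r})$ and $t^{m_v}:=t_1^{m_{v,f_1}}\cdots t_r^{m_{v,f_r}}$, I would invoke the analogous formula of Eisenbud--Neumann \cite{en}, which is established through splice/cabling decomposition of the link along the plumbing graph; alternatively, one can apply a multivariable A'Campo-type formula obtained by keeping track of the separate multiplicities along each branch of the strict transform. The step of passing from $\zeta_h$ to $\Delta_K$ generalizes because fibered algebraic links remain fibered when each component is considered with its own variable, and the Reidemeister torsion identifications are standard.

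The main obstacle is the sign/unit bookkeeping in the single-variable derivation and, more substantively, the justification of the multivariable version: arrowheads no longer contribute a vanishing Euler characteristic in a naive sense (since variables $t_j$ are attached to them), and one must properly account for this either via Eisenbud--Neumann's splice decomposition or via a Sabbah-type refinement of A'Campo's argument. Everything else is a direct computation from the resolution data.
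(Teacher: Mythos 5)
Your derivation is correct and is exactly the route the paper intends: the statement is simply quoted from A'Campo \cite{ac:75}, and the paper's own framing ("can be computed using A'Campo's formula for the zeta function of the monodromy") is your argument --- A'Campo's zeta formula on the resolution, Euler characteristics $2-\rho(v)$ for the punctured exceptional curves, and the identification of $\Delta_K(t)$ with the characteristic polynomial of the monodromy on $H_1(F)$ for the fibered link of a plane curve germ, all signs being absorbed into units. The closing paragraph on the multivariable version is unnecessary for this statement, since that is the separate Eisenbud--Neumann result \cite{en} stated as the next proposition in the paper.
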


This formula has been generalized by Eisenbud and Neumann~\cite{en} for the multi-variable Alexander polynomial
which works also for integral homology spheres.

\begin{prop}[{\cite[Theorem~12.1]{en}}]\label{thm-alex-en}
The multi-variable Alexander polynomial of the link $K$ associated with the irreducible factorization of 
the reduced germ $f=f_1\cdots f_r$ is given as follows
$$
\Delta_K(\underline{t})=\prod_{v\in \Gamma} \left(\underline{t}^{m_{v}}-1\right)^{\rho(v)-2},
$$ 
where $\underline{t}=(t_1,\dots,t_r)$ and $\underline{t}^{m_{v}}=t_1^{m_{v,f_1}}\cdots t_r^{m_{v,f_r}}$.
\end{prop}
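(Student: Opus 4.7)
The approach I would take is to exploit the plumbing description of $\bss^3 \setminus K$ as a graph manifold and compute $\Delta_K$ as a product of local contributions from each piece, using multiplicativity of Reidemeister torsion (equivalently, multiplicativity of Alexander orders under gluing along incompressible tori). This is in the spirit of Eisenbud--Neumann's own splice-diagram argument: decompose $\bss^3 \setminus K$ along $\Gamma$ as a union of Seifert pieces $M_v$, one per vertex, glued along gluing tori $T_e$ indexed by the interior edges, with arrowheads corresponding to the components of $K$ themselves. The piece $M_v$ is a (possibly Seifert-singular) $\bss^1$-bundle over a $\rho(v)$-punctured sphere $F_v$, with regular fiber $\g_v$.

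Two ingredients then enter. First, I would show that under the abelianization $G_K \twoheadrightarrow \bz^r$, the fiber class $\g_v$ is mapped to the monomial $\underline{t}^{m_v} = t_1^{m_{v,f_1}} \cdots t_r^{m_{v,f_r}}$; this is proved by induction over $\Gamma$, starting from the arrow-carrying vertices (where the statement is the very definition of $m_{v,f_i}$ as an intersection multiplicity) and propagating through the graph using the Euler/multiplicity relation~\eqref{eq-euler-mult}, which encodes precisely the compatibility of fibers across a plumbing edge. Second, I would compute, for a generic character $\xi$ of $G_K$, the torsion of each piece: since $M_v$ is homotopy equivalent (modulo Seifert twists, which do not affect torsion) to $\bss^1 \times F_v$ with $\chi(F_v) = 2 - \rho(v)$, a K\"unneth-type torsion computation yields $\tau(M_v; \xi) \doteq (\xi(\g_v) - 1)^{\rho(v) - 2} = (\underline{t}^{m_v} - 1)^{\rho(v) - 2}$, while each gluing torus $T_e$ contributes a factor in the Mayer--Vietoris denominator that exactly accounts for the over-counting of interior edges at the adjacent vertices.

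Assembling these ingredients via the Mayer--Vietoris multiplicativity of torsion along $\Gamma$ produces $\Delta_K(\underline{t}) \doteq \prod_{v \in \Gamma} (\underline{t}^{m_v} - 1)^{\rho(v) - 2}$, which is the claimed formula. The main obstacle is twofold. First, the identification $\g_v \mapsto \underline{t}^{m_v}$ must be verified with care for orientation and sign conventions on the plumbing graph, since the plumbing relations mix the Euler weights $e_v$ with the edge framings; equation~\eqref{eq-euler-mult} is the key lever here, and checking that it yields the \emph{same} multiplicities on every branch of $\Gamma$ is the non-trivial arithmetic input. Second, the bookkeeping of Mayer--Vietoris denominators versus numerators must be managed so that the factor $(\underline{t}^{m_v} - 1)^{\rho(v) - 2}$ survives — including in the degenerate cases $\rho(v) \leq 2$, where the local factor lies a priori in the field of fractions and only the \emph{global} product is guaranteed to be a Laurent polynomial. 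An alternative, more direct route uses the group presentation of $G_K$ developed in section~\ref{sec-graph} together with Fox calculus, reducing the Alexander matrix to a block form along $\Gamma$ whose vertex blocks produce the claimed factors; the graph-theoretic bookkeeping is essentially the same in either approach.
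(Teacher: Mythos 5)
The paper does not prove this statement at all: it is quoted verbatim from Eisenbud--Neumann \cite[Theorem~12.1]{en}, so there is no in-paper argument to compare against. Your sketch is, in substance, a faithful reconstruction of the standard Eisenbud--Neumann/Milnor-torsion proof: identify the fiber class $\g_v$ with $\underline{t}^{m_v}$ in the abelianization (which indeed follows from linking numbers/intersection multiplicities and is propagated through the graph by~\eqref{eq-euler-mult}), compute the torsion of each vertex piece $M_v\cong \bss^1\times F_v$ as $(\xi(\g_v)-1)^{-\chi(F_v)}=(\underline{t}^{m_v}-1)^{\rho(v)-2}$, and assemble via multiplicativity of torsion along the plumbing decomposition. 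Two small corrections to your bookkeeping: for a character whose restriction to a gluing torus $T_e$ is nontrivial, the twisted homology of $T_e$ vanishes and its torsion is $1$, so the gluing tori contribute \emph{trivially} rather than as compensating denominators --- the exponent $\rho(v)-2$ already counts all boundary tori of $F_v$, including those along interior edges, so no over-counting correction is needed; and the identification of the assembled torsion with $\Delta_K$ holds as stated only for $r\geq 2$, while for $r=1$ torsion equals $\Delta_K(t)/(t-1)$, which is exactly why A'Campo's one-variable formula quoted just before carries the extra factor $(t-1)$. With the genericity caveat (work with $\xi$ nontrivial on every $\g_v$ and every $T_e$, then conclude by equality of rational functions up to units) your argument is correct; the alternative Fox-calculus route you mention is also viable and is essentially what Section~\ref{sec-graph} of the paper sets up for the more general graph-manifold computations, though the paper never uses it to reprove this proposition.
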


\subsection{Characteristic varieties and obstructions}
\label{sec:charvar}
\mbox{}

Characteristic varieties form a stratification of the space of characters of a group.
The special structure of characteristic varieties of quasi-projective groups provide a number of non-trivial 
obstructions for a group to be quasi-projective. Roughly speaking, the positive-dimensional part of characteristic 
varieties comes from maps from the quasi-projective variety onto \emph{orbicurves}; as an extension of 
Zariski--Lefschetz theory, we may say that most properties of characteristic varieties come from curves. 
these ideas appeared in Beauville's work~\cite{Be} for the projective case and were extended to the quasi-projective 
case by Arapura~\cite{ara:97}. Further properties were found by Budur, Dimca, Libgober, Suciu, and the authors in 
several papers, see~\cite{ACM-quasi-projective} for references. Let us introduce what will be needed later.

For a group $G$ (say, finitely presented) we consider its character torus 
\[
\mathbb{T}_G=H^1(G;\mathbb{C}^*)=\Hom(G;\mathbb{C}^*). 
\]
If $G/G'$ is isomorphic $\mathbb{Z}^r\oplus\bigoplus_{k=1}^s\mathbb{Z}/n_k$, then
$\mathbb{T}_G$ is a complex algebraic group isomorphic to $(\mathbb{C}^*)^r\times\prod_{k=1}^s \mu_{n_k}$,
where $\mu_n$ is the group of complex $n$-roots of unity.

Let $X$ be a topological space (having the homotopy type of a $CW$-complex) such that $\pi_1(X)\cong G$;
a character $\xi\in\mathbb{T}_G$ defines a local system of coefficients $\underline{\mathbb{C}}_\xi$ for which 
one may compute its cohomology $H^*(X;\underline{\mathbb{C}}_\xi)$. The first cohomology group depends 
only on $G$ and will be denoted by $H^1(G;\underline{\mathbb{C}}_\xi)$.

The $k$-th characteristic variety of $G$ is defined as:
$$
\Char_k(G):=\{\xi\in\mathbb{T}_G\mid \dim H^1(G;\underline{\mathbb{C}}_\xi)\geq k\}.
$$
Note that $\Char_k(G)\supset \Char_{k+1}(G)$ produces a stratification of the space of characters.
We may also consider each characteristic strata
$$
\mathcal{V}_k(G):=\{\xi\in\mathbb{T}_G\mid \dim H^1(G;\underline{\mathbb{C}}_\xi)= k\}=\Char_k(G)\setminus\Char_{k+1}(G).
$$

The first obstruction we can apply is the following one: the irreducible components
of $\Char_k(G)$ are subtori translated by torsion elements (see~\cite{ara:97}). The following are finer obstructions to being 
a global group (a quasi-projective group) which will be used in this paper. They can be deduced 
from~\cite[Proposition~6.5]{ACM-quasi-projective}.

\begin{obstruction}\label{thm-obs}
Let $G$ be a quasi-projective group. Let $A$ be a subset of $\mathcal{V}_k(G)$, $k>0$, and let
$B\subset\mathcal{V}_{k+\ell}$, $\ell>0$. If $B\subset\bar{A}$ then $B$ is finite. 
\end{obstruction}

Without loss of generality, one can assume that $A\subset V_1$ (resp. $B\subset V_2$) is contained in an irreducible 
component of $\Char_k(G)$ (resp. $\Char_{k+\ell}$) which is not a component of $\Char_{k+1}(G)$ (resp. $\Char_{k+\ell+1}$). 
By~\cite[Proposition~6.5(4)]{ACM-quasi-projective} $V_2$ is an irreducible component of $\Char_{k+1}(G)$ and hence 
$B\subset \bar{A}\subset V_1$ implies $B$ is finite by~\cite[Proposition~6.5(3)]{ACM-quasi-projective}.

For the sake of completeness, we will describe a classical method to explicitly compute 
$\dim H^1(G;\underline{\mathbb{C}}_\xi)$ from a presentation of~$G$. Assume that $G$ has a presentation
$$
G=
\left\langle
x_1,\dots,x_m\left|
R_1(\mathbf{x}),\dots,R_\ell(\mathbf{x})
\right.
\right\rangle.
$$
Let $\ab:G\to G/G'$ be the abelianization map and let $\Lambda:=\mathbb{C}[G/G']$; 
we denote by $t_j:=\ab(x_j)$, i.e., the ring
$\Lambda$ is the quotient of $\mathbb{C}[t_1^{\pm 1},\dots,t_m^{\pm 1}]$ by the ideal
generated by the abelianization of the relations $R_j(\mathbf{x})$, $j=1,\dots,\ell$. In an equivalent
way, $\mathbb{T}_G$ is a subgroup of a torus $(\mathbb{C}^*)^m$. The Fox matrix
$\fox(\mathbf{t})\in\mat(\ell\times m;\Lambda)$ is defined as follows:
its $(i,j)$-entry is $\frac{\partial R_i(\mathbf{x})}{\partial x_j}$ in~$x_1,\dots,x_m$. 
Let us recall the definition of the Fox derivatives (with respect to the abelianization morphism):
\begin{equation}
\label{eq:fox-derivatives}
\frac{\partial 1}{\partial x_j}=0,\quad
\frac{\partial x_i}{\partial x_j}=\delta_{i,j},\quad
\frac{\partial (w_1\cdot w_2)}{\partial x_j}=
\frac{\partial w_1}{\partial x_j}+\ab(w_1)\frac{\partial w_2}{\partial x_j}.
\end{equation}
Here the abelianization morphism $\ab:G\to G/G'$ is extended to the complex group algebra $\Lambda$ 
so that the element $\ab(w_1)\frac{\partial w_2}{\partial x_j}\in \Lambda$ is correctly defined.
A character $\xi\in\mathbb{T}_G$ defines a morphism 
$\Lambda\to\bc$ denoted also by~$\xi$. The evaluation by 
$\xi$ of the matrix 
$\fox(\mathbf{t})$ yields a matrix
$\fox(\xi)\in\mat(\ell\times m;\mathbb{C})$.

\begin{prop}\label{prop:corank}
Let $\xi\in\mathbb{T}_G$. Then,
$$
\dim_{\mathbb{C}} H^1(G;\underline{\mathbb{C}}_{\xi})=
\begin{cases}
m-\rk(\fox(\xi))-1&\text{ if }\xi\neq\mathbf{1},\\
m-\rk(\fox(\mathbf{1}))&\text{ if }\xi=\mathbf{1}.
\end{cases}
$$ 
\end{prop}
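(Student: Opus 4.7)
The plan is to compute $H^1(G;\underline{\mathbb{C}}_\xi)$ from the cellular cochain complex of the presentation $2$-complex. First I would introduce the $2$-complex $K_G$ associated to the presentation: one $0$-cell, one $1$-cell per generator $x_j$, and one $2$-cell per relator $R_i$, attached along the word $R_i(\mathbf{x})$. By construction $\pi_1(K_G)\cong G$, and since the first cohomology of a space (with any local coefficients) is determined by its $2$-skeleton, the equality $H^1(G;\underline{\mathbb{C}}_\xi)=H^1(K_G;\underline{\mathbb{C}}_\xi)$ (implicitly used above in the very definition of $H^1(G;\underline{\mathbb{C}}_\xi)$) reduces the problem to a finite linear algebra computation.

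Next I would write down the twisted cellular cochain complex
$$\mathbb{C}\xrightarrow{d^0}\mathbb{C}^m\xrightarrow{d^1}\mathbb{C}^\ell,$$
with $C^0,C^1,C^2$ indexed by the cells of $K_G$. The map $d^0$ sends $1\mapsto(\xi(x_j)-1)_{j=1}^m$, computed from the fact that both endpoints of the loop $x_j$ are the unique $0$-cell and the monodromy along it equals $\xi(x_j)$. The key step is to identify $d^1$ with $\fox(\xi)$ up to transpose: this is the classical Fox calculus statement, established by induction on the length of the word $R_i$, where the derivation rule \eqref{eq:fox-derivatives} for $\partial/\partial x_j$ matches exactly the recursive description of the twisted coboundary on concatenated $1$-cells. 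Since rank is invariant under transposition, this yields $\rk(d^1)=\rk(\fox(\xi))$.

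Finally I would extract $\dim H^1=\dim\ker(d^1)-\dim\operatorname{im}(d^0)$. In every case $\dim\ker(d^1)=m-\rk(\fox(\xi))$. If $\xi\neq\mathbf{1}$ then some $\xi(x_j)\neq 1$, so the column $d^0(1)$ is nonzero, $\dim\operatorname{im}(d^0)=1$, and $\dim H^1(G;\underline{\mathbb{C}}_\xi)=m-\rk(\fox(\xi))-1$. If $\xi=\mathbf{1}$ then $d^0=0$, and $\dim H^1(G;\underline{\mathbb{C}}_\xi)=m-\rk(\fox(\mathbf{1}))$. The only nonroutine ingredient is the Fox calculus identification of $d^1$ in the middle step; everything else is bookkeeping about kernels and images of the two maps in a three-term complex.
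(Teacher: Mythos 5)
Your proof is correct and follows essentially the same route as the paper: both compute with the CW-complex of the presentation (one $0$-cell, $m$ one-cells, $\ell$ two-cells), identify the relevant (co)boundary map with the Fox matrix $\fox(\xi)$ up to transpose, and finish by the rank count, splitting into the cases $\xi\neq\mathbf{1}$ and $\xi=\mathbf{1}$. The only cosmetic difference is that you work with the twisted cochain complex while the paper writes the twisted chain complex, which changes nothing since the dimensions of $H^1$ and $H_1$ agree over $\mathbb{C}$.
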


\begin{proof}
Let us consider the $CW$-complex associated with the presentation
of~$G$ and let $p$ be the unique $0$-cell. 
The complex chain is given by
\begin{equation*} 
\begin{tikzcd}[row sep=0]
0\ar[r]&[-15pt]
C_2(X)=\mathbb{C}\langle R_1,\dots,R_\ell\rangle\ar[r,"\partial_2"]&[-3pt]
C_1(X)=\mathbb{C}\langle x_1,\dots,x_m\rangle
\ar[r,"\partial_1"]
&[-3pt]
C_0(X)=\mathbb{C}\langle p\rangle\ar[r]&[-16pt]0
\end{tikzcd}
\end{equation*}
where $\partial_1(x_j)=(\xi(x_j)-1)p$ and the matrix of $\partial_2$
is $^t\fox(\xi)$. The result follows.
\end{proof}

\begin{rem}
\label{rem:corank}
By Proposition~\ref{prop:corank} $\Char_k(G_K)\subset \{\mathbf{1}\}$ if $k\geq m$.
\end{rem}

The hypersurface part of the first characteristic variety $\Char_1(G_K)$
is the zero locus of the multi-variable Alexander polynomial $\Delta_K(\underline{t})$. In particular,
the following result can be proved using Obstruction~\ref{thm-obs}. Note that from Proposition~\ref{thm-alex-en}
the codimension~$1$ part is a union of tori translated by torsion elements.

\begin{prop}[\cite{DPS4}]
If $G$ is a global group with $r := b_1 (G)\geq 3$, then $\Delta(\underline{t})$ has a single essential variable, 
that is $\Delta(\underline{t})=P(u)$, with $P \in\mathbb{Z}[u^{\pm 1}]$ for some $u=\underline{t}^e$.
\end{prop}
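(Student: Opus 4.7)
\smallskip

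\noindent\emph{Proof plan.} I would argue by contrapositive. Assume $\Delta(\underline t)$ is not a polynomial in a single monomial; the goal is to produce two distinct irreducible components of $\Char_1(G)$ with infinite intersection, directly contradicting Obstruction~1.

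The first step is to invoke Arapura's theorem, recalled at the start of \S\ref{sec:charvar}: every irreducible component of $\Char_1(G)$ is a subtorus of $\mathbb{T}_G$ translated by a torsion character. A codimension-one such component sitting inside the identity torus $(\mathbb{C}^*)^r\subset\mathbb{T}_G$ is therefore cut out by a single equation of the form $\underline t^{\,e}-\zeta=0$ with $e\in\mathbb{Z}^r$ primitive and $\zeta$ a root of unity. Since the codimension-one part of $\Char_1(G)$ equals the hypersurface $V(\Delta)$, every irreducible factor of $\Delta$ is, up to a unit of the Laurent ring, of the form $\underline t^{\,e_i}-\zeta_i$.

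Next I would split into cases according to these exponents. If all the $e_i$ are proportional to a single primitive vector $e\in\mathbb{Z}^r$, then $\Delta$ is a polynomial in $u=\underline t^{\,e}$ and the conclusion holds. Otherwise some pair of factors, say $\underline t^{\,e_1}-\zeta_1$ and $\underline t^{\,e_2}-\zeta_2$, have $\mathbb{Q}$-linearly independent exponents; their zero sets $\Sigma_1,\Sigma_2$ are then two \emph{distinct} codimension-one components of $\Char_1(G)$. The intersection $\Sigma_1\cap\Sigma_2$ is a finite union of cosets of the subtorus $\{\chi\in(\mathbb{C}^*)^r:\chi^{e_1}=\chi^{e_2}=1\}$, which by linear independence has dimension $r-2\geq 1$, so $\Sigma_1\cap\Sigma_2$ is infinite. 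This is precisely the scenario forbidden by Obstruction~1, so $G$ cannot be global.

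The only routine check is that $\Sigma_1\cap\Sigma_2$ is actually nonempty, so that at least one coset exists. This is immediate: the group homomorphism $(\mathbb{C}^*)^r\to(\mathbb{C}^*)^2$ sending $\chi\mapsto(\chi^{e_1},\chi^{e_2})$ has image of full rank $2$ whenever $e_1,e_2$ are linearly independent, hence is surjective, so the pair $(\zeta_1,\zeta_2)$ is attained. Everything else is a dictionary between factors of $\Delta$ and translated subtori, with the hypothesis $r\geq 3$ entering in a single spot: the final dimension count that promotes a codimension-two intersection from possibly finite to necessarily infinite. That bookkeeping, rather than any subtle obstruction-theoretic input, is the only place where care is required.
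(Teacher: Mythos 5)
Your proposal is correct and follows essentially the paper's own (sketched) route, which is the \cite{DPS4} argument recalled in the introduction: identify the codimension-one part of $\Char_1(G)$ with $V(\Delta)$, use Arapura's theorem to write each factor as a torsion-translated subtorus $\underline{t}^{\,e}=\zeta$, and observe that two factors with non-proportional exponents give distinct components of $\Char_1(G)$ meeting along a coset of dimension $r-2\geq 1$, contradicting the finite-intersection obstruction for quasi-projective groups. The only cosmetic difference is that the paper points to Obstruction~\ref{thm-obs} while you invoke the intersection obstruction from the introduction; these amount to the same mechanism here, so no further comment is needed.
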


Using Proposition~\ref{thm-alex-en} and \eqref{eq-euler-mult} it is possible to prove \cite[Theorem~4]{P07}, i.e.,
the algebraic links for which $G_K$ is non quasi-projective 
are those for which~$K$ has at least three connected components, at least two branching vertices, and one essential vertex.
We can extend these arguments for links with two components, for which we need the following result.

\begin{obstruction}[{\cite[Proposition~6.9]{ACM-quasi-projective}}]\label{prop-obs2}
Let $G$ be a quasi-projective group, and let $V_1\subset\Char_k(G)$, $V_2\subset\Char_\ell(G)$ be two distinct
irreducible components. If $\xi\in V_1 \cap V_2$, then this torsion point
satisfies $\xi \in\Char_{k+\ell}(G)$.
\end{obstruction}

Note that the multiplicities in $\Delta_K(t)$ are not completely related with
the hypersurfaces in $\Char_k(G_K)$, $k>1$. The following example deals with this issue.

\begin{example}
We continue with Example~\ref{ex-acampo0}. Note that
$$
\Delta_K(t_1,t_2)=(t_1^2 t_2+1)(t_1^5 t_2^2+1),\quad
\Delta_K(t)=(t^3+1)(t^7 +1)(t-1).
$$ 
The intersection of the two hypersurfaces
in the zero locus of $\Delta_K$ is $(t_1,t_2)=(-1,-1)$. This point
should be in $\Char_2(G_k)$ by Obstruction~\ref{prop-obs2}.
However, by Remark~\ref{rem:corank} we have $(-1,-1)\not\in\Char_2(G_K)$.
\end{example}

\section{Graph manifolds}\label{sec-graph}

Graph manifolds were classified by Waldhausen~\cite{wal:67b}. In this section we will focus on a special 
class of graph manifolds, namely oriented graph manifolds, made by oriented $\mathbb{S}^1$-bundles, which 
are best described by plumbing graphs~\cite{neu:81} as we did in~\S\ref{sec-links}. 

The main goal of this section is to provide a presentation of the fundamental group of the complement
of links whose connected components are fibers of some $\mathbb{S}^1$-bundles in such graph manifolds.
We follow the procedures explained in works of Mumford~\cite{mum:61} and Hironaka~\cite{eh:98}, see
also the recent work of Koll{\'a}r and N{\'e}methi~\cite{kol-nem:13}. A complete description is needed for the 
computation of the Fox matrix and characteristic varieties.

\subsection{The fundamental group of a plumbing graph}
\mbox{}

Let $\Gamma$ be a connected graph without loops. Let us decompose its set of vertices as a disjoint union $V\coprod H$,
where $V$ is non-empty and its elements are called \emph{vertices} and $H$ is a set of terminal vertices
(i.e., of degree 1) and called \emph{arrowheads}. The set of edges is also decomposed as a disjoint union $E\coprod F$; 
the elements of $E$ are called \emph{edges} and connect two vertices and those of $F$ are called \emph{arrows} and 
connect an arrowhead and a vertex. Note that $r:=|H|=|F|\geq 0$. Let us denote $n:=|V|>0$. For $v,w\in V$, $v\neq w$, 
we set $E_{v,w}$ as the set of edges connecting $v$ and~$w$.
Given an arrowhead $h$ we will denote by $v_h$ the unique vertex connected with $h$; the connecting arrow
will be called~$e_h$. For a vertex $v$ the set of vertices (resp. arrowheads) connected with $v$ is denoted by $V_v$ 
(resp.~$H_v$).

The vertices are weighted by two functions $g,\varepsilon:V\to\mathbb{Z}$; the function $\varepsilon$ is the 
\emph{Euler number} and the function $g$ is the genus, $g(v)\geq 0$, $\forall v\in V$; we will denote
$g(\Gamma):=\sum_{v\in V} g(v)$. 

This graph codifies a graph manifold $M$ with a link $L$ of $r$~components. Recall we are only considering oriented 
graph manifolds such that all its Seifert pieces are Seifert bundles over oriented surfaces.

The following two incidence matrices $A=(a_{v w})_{v,w\in V}$ and $B=(b_{vh})_{v\in V,h\in H}$
are considered:

\begin{equation}
\label{eq:AB}
a_{v w}=
\begin{cases}
\varepsilon(v)&\text{ if }v=w\\
|E_{v,w}|&\text{ otherwise },
\end{cases}
\quad \quad \quad
b_{v h}=\delta_{vv_h}=
\begin{cases}
1&\text{ if }h\in H_v\\
0&\text{ otherwise.}
\end{cases}
\end{equation}

For a presentation of $\pi_1(M\setminus L)$ we start by fixing a maximal tree~$T$ of $\Gamma$.
It is useful to fix a \emph{directed rooted} tree structure on $T$ away from the root. This can be 
extended first to an order on $V$ and then to one on $H$ such that given $h\in H_v$ and $h'\in H_w$, then 
$h<h'$ if $v<w$. Since $\Gamma$ contains no loops, this ordering produces an orientation respecting it.
Also, a linear order will be chosen for the arrowheads $H_v$ at each vertex~$v$.
Finally, each set of edges $E_{v,w}$ between $v,w$ will be ordered in such a way that the minimal element
is the unique edge $e_0\in T\cap E_{v,w}$ in the maximal tree $T$ that was fixed above. The set $E_v$ of
edges from $v$ inherits an order respecting both the order in $V_v$ and in $E_{v,w}$,
that is, if $e,e'\in E_{v,w}$, $e'\in E_{v,w'}$, then 
$$e<e' \Leftrightarrow \begin{cases}w<w'\\ \textrm{or} \\ w=w' \textrm{ and } e<e' \textrm{ as edges in }E_{v,w}. \end{cases}$$
The notation $E^*$ and $E^*_{v,w}$ refers to the respective edges outside of the maximal tree $T$.

The generator system for $\pi_1(M\setminus L)$ is obtained as follows:
\begin{enumerate}
\enet{\rm(G\arabic{enumi})}
\item\label{G1} 
$\gamma_v$ for $v\in V$.
\item \label{G2} 
$\gamma_h$ for $h\in H$.
\item \label{G3} 
$\gamma_e$ where $e\in E^*$.
\item\label{G4} 
$\aaa_{j,v},\bbb_{j,v}$, for $1\leq j\leq g(v)$ and $v\in V$.
\end{enumerate}

In order to describe the relations we introduce some notation. For each oriented edge $\vec{e}\in E$ starting at 
$v_1(e)$ and ending at $v_2(e)$ we denote:
\begin{equation}
\label{eq:gammae}
\gamma_{\vec{e}}:=
\begin{cases}
1&\text{ if }e\in T;\\
\gamma_e&\text{ if } e\notin T\text{ and }v_1(e)<v_2(e)
\\
\gamma_e^{-1}&\text{ if } e\notin T\text{ and }v_2(e)<v_1(e)
\end{cases}
\end{equation}
We have the following relations:
\begin{enumerate}
\enet{\rm(R\arabic{enumi})}
\item \label{r1}
$\displaystyle\left(\prod_{\vec{e}\in E_{v}} 
\gamma_{\vec{e}}\cdot\gamma_{v_2(e)}\cdot\gamma_{\vec{e}}^{-1}\right)\cdot
\left(\prod_{h\in H_v} \gamma_h \right)\cdot \gamma_v^{\varepsilon(v)}=\prod_{j=1}^{g(v)}[\aaa_{j,v},\bbb_{j,v}]$,
for $v\in V$.
\item \label{r2}
$[\gamma_h,\gamma_{v_h}]=1$ for $h\in H$.
\item\label{r3} 
$[\gamma_{v_1(e)},\gamma_{\vec{e}}\cdot\gamma_{v_2(e)}\cdot\gamma_{\vec{e}}^{-1}]=1$ for $e\in E^*$.
\item\label{r4} 
$[\gamma_v,\aaa_{j,v}]=[\gamma_v,\bbb_{j,v}]=1$ for $1\leq j\leq g(v)$ and $v\in V$.
\end{enumerate}

\begin{rem}
Note that the product in~\ref{r1} is taken respecting the order of the vertices in $V_v$. Moreover, for a fixed
$w\in V_v$ the product in ${\vec{e}\in \vec{E}_{v,w}}$ is also taken respecting the order in~$E_{v,w}$.

Also note that if $H\neq\emptyset$, then one relation in in~\ref{r2} is redundant. More precisely, any relation 
in~\ref{r2} is a consequence of the remaining relations in~\ref{r2} together with \ref{r1}+\ref{r3}+\ref{r4}. 
For symmetry reasons, we will consider all the relations for $h\in H$, but this fact will be systematically used.
\end{rem}

\begin{thm}
\label{thm:pi1}
The fundamental group of $M\setminus L$ is generated by \ref{G1}-\ref{G4} with relations \ref{r1}-\ref{r4}. 

As a consequence, this group admits a presentation with as many
generators as relations. If $r>0$, one of the relations in \ref{r2}
can be omitted.
\end{thm}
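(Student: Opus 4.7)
The plan is to realize $M\setminus L$ as a graph of spaces over $\Gamma$ and apply Seifert--van Kampen. For each vertex $v\in V$, I would take $\Sigma_v$ to be a closed oriented surface of genus $g(v)$ with $\rho(v)$ open disks removed, one per neighbor in $\Gamma$, and let $M_v^\circ$ be the trivial $S^1$-bundle over $\Sigma_v$. Its fundamental group has the standard presentation of a circle bundle over a bordered surface: the fiber $\gamma_v$, the surface generators $\alpha_{j,v},\beta_{j,v}$ of~\ref{G4}, and one peripheral loop $\mu_{v,x}$ per boundary component (indexed by neighbors $x\in V_v\cup H_v$), subject to the centrality of $\gamma_v$ and to $\bigl(\prod_x \mu_{v,x}\bigr)=\prod_j[\alpha_{j,v},\beta_{j,v}]$. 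The Euler numbers $\varepsilon(v)$ enter only through the framings of the gluings encoded by the matrix $A$ of~\eqref{eq:AB}.

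Next I would glue these pieces along tori indexed by edges, using a rooted maximal tree $T$. Each tree edge $e\in T$ gives an amalgamation that, by the plumbing convention, interchanges fiber and section directions and so identifies $\mu_{v_1(e),v_2(e)}$ with $\gamma_{v_2(e)}$ (and vice versa from the other side); no new generator appears. Each non-tree edge $e\in E^*$ produces an HNN extension with stable letter $\gamma_e$ from~\ref{G3}: the sign convention~\eqref{eq:gammae} together with the stable-letter relation yields~\ref{r3} and forces the peripheral loop to equal $\gamma_{\vec e}\gamma_{v_2(e)}\gamma_{\vec e}^{-1}$. Substituting these identifications into the local surface relation at $v$, together with the total Euler twist $\gamma_v^{\varepsilon(v)}$ contributed by the combined framings, produces exactly~\ref{r1}. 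For each arrow $h\in H$, removing a regular fiber from $M_{v_h}^\circ$ frees the meridian $\gamma_h$ of~\ref{G2}, and centrality of $\gamma_{v_h}$ gives~\ref{r2}; relation~\ref{r4} is inherited from the local presentations.

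Since $\Gamma$ is connected, $|E^*|=|E|-n+1$ and a direct tally shows that \ref{G1}--\ref{G4} and \ref{r1}--\ref{r4} each contribute $n+r+|E^*|+2g(\Gamma)$ items, so the presentation is balanced. The redundancy when $r>0$ is the main obstacle I foresee. The approach I would take is to form the ordered product of all relations~\ref{r1} along the tree $T$: using~\ref{r3} each conjugated factor $\gamma_{\vec e}\gamma_{v_2(e)}\gamma_{\vec e}^{-1}$ coming from the relation at $v_1(e)$ cancels with the corresponding factor from the relation at $v_2(e)$, while using~\ref{r4} the surface commutators telescope to the identity. What remains is a single global identity expressing one chosen meridian $\gamma_{h_0}$ in terms of the other $\gamma_h$'s and central powers of the $\gamma_v$'s, which modulo the remaining relations in~\ref{r2} forces~\ref{r2} for $h_0$. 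Geometrically, this reflects the fact that the meridians of $L$ together with the fiber classes satisfy a single relation in $H_2(M;\mathbb{Z})$ dictated by Poincar\'e duality.
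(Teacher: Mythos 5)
Your overall strategy --- a graph-of-spaces decomposition with trivial circle bundles over punctured surfaces as vertex pieces, amalgamations along tree edges and HNN extensions along the edges of $E^*$ --- is the classical Mumford--Hironaka route, and it differs in form from the paper, which instead argues by induction (on the number of vertices of a tree, then on $b_1(\Gamma)$), splitting off a leaf vertex at each step via Seifert--van Kampen. However, there are genuine gaps at exactly the points where the content of the statement lies. First, the presentation your construction actually produces is not the one claimed: each vertex piece imposes centrality of $\gamma_v$ with \emph{all} of its peripheral loops, so after the identifications every tree edge $e=\langle v,w\rangle$ leaves behind a relation $[\gamma_v,\gamma_w]=1$ which is not among \ref{r1}--\ref{r4}. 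The theorem asserts that \ref{r1}--\ref{r4} alone suffice, so you must prove these extra commutators are consequences; your ``direct tally'' of generators and relations silently drops them. They are indeed redundant (this is the paper's remark following the theorem), but proving it is the real work: one solves \ref{r1} at a leaf $w$ for the parent fiber $\gamma_v$ and uses \ref{r2}, \ref{r4} and \ref{r3} (plus already-derived edge commutators) to see that $\gamma_w$ commutes with it, proceeding leaf-to-root; the paper's induction performs precisely this bookkeeping, discarding at each splitting the edge commutator because it coincides with the redundant \ref{r2} relation of the one-vertex piece.

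Second, your mechanism for omitting one relation of \ref{r2} when $r>0$ does not work as described. For a tree edge one has $\gamma_{\vec{e}}=1$, so the factor contributed to \ref{r1} at $v_1(e)$ is simply $\gamma_{v_2(e)}$ while the factor at $v_2(e)$ is $\gamma_{v_1(e)}$; multiplying the relators along $T$ does not cancel these against one another, and \ref{r4} does not make the surface commutators $[\aaa_{j,v},\bbb_{j,v}]$ ``telescope to the identity'' (it only says $\gamma_v$ commutes with them). The correct derivation is again the substitution argument above: express $\gamma_{h_0}$ from \ref{r1} at its vertex and check that $\gamma_{v_{h_0}}$ commutes with every remaining factor, which uses the other relations of \ref{r2}, together with \ref{r3}, \ref{r4} \emph{and} the tree-edge commutators from the first point. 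Two smaller issues: the step where the Euler twist $\gamma_v^{\varepsilon(v)}$ emerges from the boundary framings is asserted rather than proved (this is standard, but it is exactly where the matrix $A$ enters), and the closed case $L=\emptyset$, which the paper treats separately because a generator gets killed in the final gluing, is not addressed.
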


\begin{proof}
We divide the proof in two cases, depending on $L$:

\begin{enumerate}
 \item Case $L\neq\emptyset$.
Assume first $\Gamma$ is a tree. This particular case is proved by induction on the number $n$ of vertices. 
If $n=1$, we have a fiber bundle and the presentation in~\cite{s:33} is given by the generators
in~\ref{G1}, \ref{G2} and~\ref{G4}, with the relations in \ref{r1}, \ref{r2}
and~\ref{r4} (one of the relations in \ref{r2} being redundant).
To finish the induction argument, assume that~$n>1$. Let $v\in V$ a vertex of degree 1 in $V\subset \Gamma$. 
Since $V$ has at least two degree-one vertices, one can assume there is at least one arrowhead not in $H_v$.
We cut $\Gamma$ along this edge $e=\langle v,w\rangle$ and split $\Gamma$ in two graphs: 
a one-vertex graph $\Gamma'$ with an extra arrow $h_0\in H'_v$ coming from the edge $e$ and another graph $\Gamma''$
containing $\Gamma\setminus \{v\}$ with the extra arrow $h_1\in H''_w$.
Note that~$\Gamma''$ has at least two arrows. Let $m',m''$ be the number of generators of
the fundamental groups of the corresponding graph manifolds. In particular,
we have $m'-1,m''-1$ relations; the discarded relations corresponding to 
$h_0$ and an arrowhead of $\Gamma''$ different from~$h_1$.

The fundamental group of the graph manifold of~$\Gamma$ is obtained using the Seifert--van Kampen Theorem. Since 
two couples of generators are identified, the number generators is $m'+m''-2$; it equals the number
of relations but we can discard the commutation associated to~$h_1$, since it coincides to the one associated 
to~$h_0$ which is redundant. Hence we obtain a presentation with generators \ref{G1}, \ref{G2} and~\ref{G4}, 
using the relations in \ref{r1}, \ref{r2} and~\ref{r4} (one relation of \ref{r2} being redundant).

For the general case (still $L\neq\emptyset$), we proceed by induction on the first Betti number~$m$ of~$\Gamma$. 
The case $m=0$ has been treated. It is not hard to see, using the generalized Seifert--van Kampen Theorem
that we add in this case the generators in~\ref{G3} and the relations in~\ref{r3}.

\item Case $L=\emptyset$. The closed case follows the proof above. When the graph is split, then $\Gamma'$ and
$\Gamma''$ contain only one arrowhead. Using again the Seifert--van Kampen Theorem, we kill one generator without adding 
any relations, obtaining the result.
\end{enumerate}
\end{proof}

\begin{rem}
In fact, any edge and any arrow provides a commutator, but only the ones in~\ref{r2} and~\ref{r3} are needed. 
\end{rem}

The abelianization of this fundamental group is as follows. The proof
is straightforward.

\begin{lem}
The space $H_1(M\setminus L)$ is determined as follows:
\begin{enumerate}[label=\rm(\arabic{enumi})]
\item Its rank is $\crk (A)+r+2g(\Gamma)+b_1(\Gamma)$.
\item Its torsion can be obtained from the Smith form of $(A\vert B)\in M(n\times(n+r);\mathbb{Z})$.
\end{enumerate}
\end{lem}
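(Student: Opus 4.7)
The plan is to compute $H_1(M \setminus L)$ by abelianizing the presentation of $\pi_1(M \setminus L)$ furnished by Theorem~\ref{thm:pi1}. The key observation is that relations \ref{r2}, \ref{r3}, and \ref{r4} are all commutator relations and therefore become trivial upon abelianization; only the relations of type \ref{r1}, one per vertex $v \in V$, survive.

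Next I would rewrite each surviving relation \ref{r1} additively. Conjugation becomes trivial, so $\gamma_{\vec{e}} \gamma_{v_2(e)} \gamma_{\vec{e}}^{-1}$ abelianizes to $\gamma_{v_2(e)}$; the product of commutators on the right-hand side vanishes; and summing over $\vec{e} \in E_v$ collects $|E_{v,w}|$ copies of $\gamma_w$ for each $w \in V_v$. The relation at $v$ then reads
\[
\varepsilon(v)\gamma_v \;+\; \sum_{w \in V_v} |E_{v,w}|\,\gamma_w \;+\; \sum_{h \in H_v}\gamma_h \;=\; 0,
\]
which is exactly the $v$-th row of the integer matrix $(A \mid B)$ applied to the column vector whose entries are the $\gamma_v$ (for $v \in V$) followed by the $\gamma_h$ (for $h \in H$). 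Crucially, these relations involve only the $n+r$ generators coming from \ref{G1} and \ref{G2}; the orientation choices in~\eqref{eq:gammae} are invisible in the abelianization because $\gamma_{\vec{e}}$ cancels with $\gamma_{\vec{e}}^{-1}$.

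The remaining generators therefore contribute freely: there are $b_1(\Gamma)$ generators $\gamma_e$ from \ref{G3} (one per edge outside the chosen maximal tree) and $2g(\Gamma)$ generators $\alpha_{j,v}, \beta_{j,v}$ from \ref{G4}. Hence the abelianization splits as
\[
H_1(M \setminus L) \;\cong\; \operatorname{coker}\bigl((A \mid B)^T : \mathbb{Z}^n \to \mathbb{Z}^{n+r}\bigr) \;\oplus\; \mathbb{Z}^{b_1(\Gamma)} \;\oplus\; \mathbb{Z}^{2g(\Gamma)},
\]
from which both statements follow at once: the torsion of $H_1(M \setminus L)$ equals the torsion of the first summand and is therefore read off from the elementary divisors in the Smith normal form of $(A \mid B)$, while the rank is the sum of $\crk(A)+r$ from the cokernel and $b_1(\Gamma) + 2g(\Gamma)$ from the free summands. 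There is no real obstacle here; the only thing demanding care is the bookkeeping that confirms \ref{r2}--\ref{r4} really are commutators and that the order of the products in~\ref{r1} is immaterial after passing to $H_1$.
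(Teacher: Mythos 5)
Your argument is exactly the paper's intended one: the paper gives no details beyond declaring the proof straightforward, and the straightforward proof is the abelianization of the presentation of Theorem~\ref{thm:pi1} that you carry out, with the commutator relations \ref{r2}--\ref{r4} dying, each relation \ref{r1} becoming the corresponding row of $(A\mid B)$, and the generators of types \ref{G3} and \ref{G4} surviving as free summands.

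One step, however, is asserted rather than proved: that $\operatorname{coker}\bigl((A\mid B)^T\bigr)$ has rank $\crk(A)+r$. In general this cokernel has rank $n+r-\rk(A\mid B)$, so your claim amounts to the equality $\rk(A\mid B)=\rk(A)$, i.e.\ that the columns of $B$ lie in the rational column span of $A$. This is automatic when $A$ is nonsingular (for instance negative definite, as in Assumption~\ref{as-nd} and in every application in the paper), but it is not a formal consequence of abelianization, and it can fail in the generality in which the lemma is stated: for a single vertex of genus $0$ with Euler number $0$ and one arrow, $M\setminus L\simeq\mathbb{S}^1\times\mathbb{R}^2$ has $H_1$ of rank $1$, whereas $\crk(A)+r+2g(\Gamma)+b_1(\Gamma)=2$. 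So you should either invoke the nonsingularity of $A$ at this point or record the rank as $n+r+b_1(\Gamma)+2g(\Gamma)-\rk(A\mid B)$; the same caveat applies to item (1) of the lemma as printed. The identification of the torsion with the elementary divisors of $(A\mid B)$, and the rest of your bookkeeping, are fine.
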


Then the torus $\mathbb{T}:=H^1(M\setminus L;\mathbb{C}^*)=\Hom(H_1(M\setminus L;\mathbb{Z}),\mathbb{C}^*)=
\Hom(\pi_1(M\setminus L),\mathbb{C}^*)$ will be seen as a subtorus of $(\mathbb{C}^*)^{n+r+2g(\Gamma)+b_1(\Gamma)}$.
The coordinates of this torus will be $t_v$, $v\in V$, $t_h$, $h\in H$, $t_{i,v},s_{i,v}$, $i=1,\dots,g(v)$,
$v\in V$ and $t_e$, $e\in E^*$. The equations of $\mathbb{T}$ in these coordinates are
determined by the matrices $A$ and $B$ as follows:
\begin{equation}
\label{eq:prod}
\prod_{w\in V} t_w^{a_{v w}}\cdot\prod_{h\in H_v} t_h=1,\quad\forall v\in V.
\end{equation}

\begin{example}\label{ex-grupo-acampo0}
We illustrate this method with the link provided in Example~\ref{ex-acampo0}. The tree $T$ is rooted at $v_4$ and the 
order is reversed with respect to the labels, that is, $v_i<v_j$ if $j<i$. We find six generators; the generators 
$\gamma_{i}:=\gamma_{v_i}$, $1\leq i\leq 4$, come from~\ref{G1} while the generators $\delta_j:=\gamma_{h_j}$, $j=2,4$, 
come from~\ref{G2}. We have the following relations:
\begin{itemize}
\item[\ref{r1}]
$\gamma_2=\gamma_1^2$, $\gamma_1\gamma_4\delta_2=\gamma_2^3$, $\gamma_4=\gamma_3^2$
and $\gamma_2\gamma_3\delta_4=\gamma_4$.
\item[\ref{r2}] $[\gamma_2,\delta_2]=1$ and  $[\gamma_4,\delta_2]=1$.
\end{itemize}
Note that one of the relations~\ref{r2}, say the last one, is a consequence of the other ones. 
Using the relations~\ref{r1}, we can eliminate four generators, only $\gamma_1,\gamma_3$ are kept.
The last relation looks like:
$$
1=[\gamma_2,\delta_2]=[\gamma_1^2,\gamma_4^{-1}\gamma_1^{-1}\gamma_2^3]=[\gamma_1^2,\gamma_3^{-2}\gamma_1^5]
\Longleftrightarrow [\gamma_1^2,\gamma_3^2]=1,
$$
i.e., the presentation~\eqref{pres:acampo0}.

\end{example}

\begin{example}\label{ejm:notorsion} 
Let us compute now the fundamental group of the closed graph manifold in Figure~\ref{fig:closed}.
This manifold $M$ is also a torus bundle over~$\mathbb{S}^1$ with monodromy
$$
\mathcal M=\begin{pmatrix}
-1 & 1 \\
-5 & 4
\end{pmatrix}.
$$

\begin{figure}[ht]
\vspace{-4mm}
\begin{tikzpicture}
\node[fill,circle] (v0) at (-1,0) {};
\node[fill,circle] (v1) at (1,0) {};
\draw[style={bend left}] (v0) to (v1);
\draw[style={bend right}] (v0) to (v1);
\node[left] at (v0.west) {$-5$};
\node[above] at (v0.north) {$v_1$};
\node[right] at (v1.east) {$-1$};
\node[above] at (v1.north) {$v_2$};
\end{tikzpicture}
\caption{A closed graph manifold}
\label{fig:closed}
\end{figure}
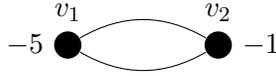

We choose the order so that $v_1<v_2$ and denote the edges in $E_{v_1,v_2}$ as $e_0, e_1$ where $e_0\in T$.
One obtains two generators $\gamma_1,\gamma_2$ of type \ref{G1} and a generator $\delta=\gamma_{e_1}$ of type~\ref{G3}. 
According to Theorem~\ref{thm:pi1} there are two relations of type~\ref{r1}:
\begin{enumerate}
\enet{($\text{rel}_{\text{\arabic{enumi}}}$)} 
\item $\gamma_2 \delta\gamma_2\delta^{-1}=\gamma_1^5$ (for $v_1\in V$);
\item\label{rel2} $\gamma_1 \delta^{-1}\gamma_1\delta=\gamma_2$ (for $v_2\in V$);
\end{enumerate}
no relations of type~\ref{r2} and one relation~\ref{r3}:
\begin{enumerate}
\enet{($\text{rel}_{\text{\arabic{enumi}}}$)} 
\setcounter{enumi}{2}
\item $[\gamma_1,\delta\gamma_2\delta^{-1}]=1$ for $e_1\in E^*_{v_1,v_2}$.
\end{enumerate}

One can use~\ref{rel2} to eliminate the generator $\gamma_2$ obtaining:
$$
\pi_1(M)=
\langle
\gamma,\delta\mid
[\delta\gamma\delta^{-1},\gamma]=1,\quad
(\delta^{-1}\gamma\delta) (\delta\gamma \delta^{-1})=\gamma^3
\rangle.
$$
The abelianization of the group $\pi_1(M)$ is isomorphic to $\mathbb{Z}$ (generated by the image of~$\delta$, 
and the class of $\gamma$ is trivial), and hence $\Lambda=\mathbb{C}[t^{\pm 1}]$.
The Fox matrix is:
$$
\begin{pmatrix}
0& 0\\
t-3+t^{-1}&0
\end{pmatrix}.
$$
Then $\Char_1(G)\subset\mathbb{C}^*$ is the set of zeroes of $t^2-3 t+1$ (which is also the characteristic 
polynomial of $\mathcal M$). Since the characteristic variety contains non-torsion isolated points, the group is non 
quasi-projective.
\end{example}

\subsection{Fox matrices of plumbing graphs}
\label{sec:fox}
\mbox{}

The twisted cohomology of $M\setminus L$ in degree~$1$ can be computed by means of the Fox matrix as 
described in section~\ref{sec:charvar}. Recall in this case it is a square matrix of size $n+r+b_1(\Gamma)+2g(\Gamma)$
%
which decomposes as:
\begin{equation}
\label{eq:fox}
\fox(\mathbf{t}):=\begin{pmatrix}
A(\mathbf{t})&\vert& B(\mathbf{t})&\vert&C(\mathbf{t})&\vert&A_g(\mathbf{t})\\
\hrulefill&&\hrulefill&&\hrulefill&&\hrulefill\\
\tilde{B}(\mathbf{t})&\vert& H(\mathbf{t})&\vert&0_{r\times b_1(\Gamma)}&\vert&0_{r\times 2 g(\Gamma)}\\
\hrulefill&&\hrulefill&&\hrulefill&&\hrulefill\\
\tilde{C}(\mathbf{t})&\vert&0_{b_1(\Gamma)\times r}&\vert&E(\mathbf{t})&\vert&0_{b_1(\Gamma)\times 2 g(\Gamma)}\\
\hrulefill&&\hrulefill&&\hrulefill&&\hrulefill\\
{\tilde{A}_g(\mathbf{t})}&\vert&0_{2 g(\Gamma)\times r}&\vert&
0_{2 g(\Gamma)\times b_1(\Gamma)}&\vert&G(\mathbf{t})
\end{pmatrix} 
\end{equation}
In what follows we will describe the submatrices of the Fox matrix.

\subsubsection{\texorpdfstring{The matrix $A(\mathbf{t})$}{The matrix A(t)}}
\mbox{}

This is a square matrix of order $n$. For $v\in V$, the $v$-row corresponds to the coefficients in the 
generators~\ref{G1} of the Fox derivatives~\eqref{eq:fox-derivatives} of the relations of type~\ref{r1} relation
\begin{equation}\label{eq-rel-v}
\left(\prod_{\vec{e}\in \vec{E}_{v}} \gamma_{\vec{e}}\cdot\gamma_{v_2(e)}\cdot\gamma_{\vec{e}}^{-1}\right)\cdot
\left(\prod_{h\in H_v} \gamma_h\right)\cdot \gamma_v^{\varepsilon(v)}=\prod_{j=1}^{g(v)}[\aaa_{j,v},\bbb_{j,v}].
\end{equation}
Hence, the entries of this matrix are as follows. 
Using~\eqref{eq:prod} note that the diagonal term $a_{v v}(\mathbf{t})$ is given by:
$$
\prod_{w\in V} t_w^{a_{v w}}\cdot\prod_{h\in H_v} t_h\cdot
\frac{1-t_v^{-\varepsilon(v)}}{t_v-1}=
\frac{1-t_v^{-\varepsilon(v)}}{t_v-1}.
$$
For $w\neq v$, the term $a_{v w}(\mathbf{t})$ is given by
$$
\sum_{\vec{e}\in E_{v,w}} T_{v,e} t_{\vec{e}},\quad \textrm{ where } \quad
T_{v,e}:=\prod_{e>e'\in E_{v}} t_{v_2(e')},
$$
and $t_{\vec{e}}$ follows the same conventions as $\gamma_{\vec{e}}$ ---\,see~\eqref{eq:gammae}.
Note that $A(\mathbf{1})=A$ as defined in~\eqref{eq:AB}.

\subsubsection{\texorpdfstring{The matrix $B(\mathbf{t})$}{The matrix B(t)}}
\mbox{}

It is a matrix of order $n\times r$ (it exists only if $r>0$). As for the previous matrices,
the $v$-row corresponds to the Fox derivation of the relation~\eqref{eq-rel-v} with respect to $\gamma_h$. 
The term $b_{v h}(\mathbf{t})$ equals
$$
T_v\prod_{h>h'\in H_v} t_{h'},\quad \textrm{ where } \quad 
T_v:=\prod_{\vec{e}\in E_{v}} t_{v_2(e)}.
$$
Note that $B(\mathbf{1})=B$ as defined in~\eqref{eq:AB}.

\subsubsection{\texorpdfstring{The matrix $C(\mathbf{t})$}{The matrix C(t)}}
\mbox{}

It is a matrix of order $n\times b_1(\Gamma)$. As for the previous submatrices $A(\mathbf{t})$ and $B(\mathbf{t})$,
the $v$-row corresponds to the Fox derivation of the relation~\eqref{eq-rel-v}, in this case with respect to $\gamma_e$
for $e\in E^*$. The term $c_{ve}(\mathbf{t})$ is obtained as
$$
c_{ve}(\mathbf{t})=
\begin{cases}
T_{v,e}(1-t_{v_2(e)})&\text{ if }\vec{e}\in E_v, v_2(e)>v,\\
T_{v,e}(t_{v_2(e)}-1)t_{\vec{e}}&\text{ if }\vec{e}\in E_v, v_2(e)<v,\\
0&\text{ otherwise}.
\end{cases}
$$
So, any column of  $C(\mathbf{t})$ has only two non-zero entries.

\subsubsection{\texorpdfstring{The matrices $A_g(\mathbf{t})$}{The matrix Ag(t)}}
\label{sec:Ag}
\mbox{}

This matrix is of order $n\times 2g(\Gamma)$. This can be seen as a block matrix whose blocks for any given $v\in V$ 
are $n\times 2g(v)$ matrices $A_{g,v}(\mathbf{t})$ whose columns are parametrized by $\aaa_{j,v},\bbb_{j,v}$, 
$j\in \{1,\dots,g(v)\}$. For any $A_{g,v}(\mathbf{t})$ only the $\aaa_{j,v}$ and $\bbb_{j,v}$ columns are non-zero.
The elements on these columns are given by:
$$
(s_{j,v}-1,-(t_{j,v}-1)).
$$

\subsubsection{\texorpdfstring{The matrix $\tilde{B}(\mathbf{t})$}{The matrix \tilde{B}(t)}}
\mbox{}

It is a matrix of order $r\times n$ corresponding to the Fox derivation of relations of type~\ref{r2} 
with respect to the variables $\gamma_v$, $v\in V$.
The term $\tilde{b}_{hv}(\mathbf{t})$ is obtained as
$$
\tilde{b}_{hv}(\mathbf{t})=
\begin{cases}
1-t_h&\textrm{ if }v_h=v\\
0&\textrm{ otherwise. }
\end{cases}
$$
Note that each row has exactly one non-zero element.
\subsubsection{\texorpdfstring{The matrix $H(\mathbf{t})$}{The matrix H(t)}}
\mbox{}
This is a diagonal matrix of order $r\times r$ corresponding to the Fox derivation of relations of type~\ref{r2} 
with respect to the arrowhead generators $\gamma_h$, $h\in H$.
The diagonal entry corresponding to~$h$ is $(t_{v_h}-1)$.

\subsubsection{\texorpdfstring{The matrix $\tilde{C}(\mathbf{t})$}{The matrix \tilde{C}(t)}}
\mbox{}

It is a matrix of order $b_1(\Gamma)\times n$ corresponding to the Fox derivation of relations of type~\ref{r3} 
with respect to the generators $\gamma_v$, $v\in V$. In order to describe it, let $e$ be an edge in $E^*$ oriented 
in the standard form so that $v_1(e)<v_2(e)$. Then
$$
\tilde{c}_{ev}(\mathbf{t})=
\begin{cases}
1-t_{v_2(e)}&\textrm{ if }v=v_1(e),\\
t_{e}(t_{v_1(e)}-1)&\textrm{ if }v=v_2(e),\\
0&\textrm{ otherwise}.
\end{cases}
$$

\subsubsection{\texorpdfstring{The matrix $E(\mathbf{t})$}{The matrix E(t)}}
\mbox{}

It is the diagonal matrix of order $b_1(\Gamma)\times b_1(\Gamma)$ corresponding to the Fox derivation of 
the relations of type~\ref{r3} with respect to the generators $\gamma_e$, $e\in E^*$. The diagonal entry for $e$ is 
$(t_{v_1(e)}-1)(1-t_{v_2(e)})$ where $v_1(e)<v_2(e)$.

\subsubsection{\texorpdfstring{The matrix $\tilde{A}_g(\mathbf{t})$}{The matrix \tilde{A}g(t)}}
\label{sec:tildeAg}
\mbox{}

This is the matrix of order $2g(\Gamma)\times n$ corresponding to the Fox derivatives of the relations of 
type~\ref{r4} with respect to the generators $\gamma_w$, $w\in V$. Symmetrically as was the case for $A_g(\mathbf{t})$,
it can be seen as a block matrix whose blocks are $2g(w)\times n$ matrices $\tilde{A}_{g,w}(\mathbf{t})$. All entries in 
their columns are zero except at the $w$ row where the respective entries are
$$
(1-t_{j,w},1-s_{j,w}).
$$

\subsubsection{\texorpdfstring{The matrix $G(\mathbf{t})$}{The matrix G(t)}}
\mbox{}

This is an order $2g(\Gamma)\times 2g(\Gamma)$ matrix. One can think of it as a block matrix whose blocks
$G_{v,w}(\mathbf{t})$ are given by $2g(v)\times 2g(w)$ matrices for any pair of vertices $v,w\in V$. One has 
the following description of the blocks:
$$
G_{v,w}(\mathbf{t})=
\begin{cases}
(t_v-1) 1_{2 g(v)}&\textrm{ if }v=w\\
0_{2g(v)\times 2g(w)}&\textrm{ otherwise. }\\
\end{cases}
$$

\subsection{Properties of Fox matrices of plumbing graphs}
\mbox{}

With the description of the Fox Matrix $\fox(\mathbf{t})$ given in~\ref{eq:fox}, one can in principle compute the 
characteristic varieties for any pair $(M,L)$ ---\,as far as computing power allows. 

To provide some computation-free results, in this section we will calculate the (first) local system cohomology
corresponding to some particular characters.
These results will be used to apply obstruction theorems to quasi-projectivity such as Obstruction~\ref{thm-obs}. 
Let us consider the following sets of characters
$$
\mathcal{B}_0:=\left\{\xi\in\mathbb{T} \left| 
\array{ll}
\xi(t_h)=1 & \forall h\in H,\\
\xi(t_v)=1 & \forall v\in V,\\
\left(\xi(t_{j,v}),\xi(s_{j,v})\right)_{j=1,...,g(v)}\neq \mathbf{1} & \textrm{if } g(v)>0
\endarray
\right.
\right\},
$$
$$
\mathcal{B}_1:=\{\xi\in \mathcal{B}_0\mid \xi(t_e)=1, \forall e\in E^*\}.
$$

\begin{lem}
The following properties hold:
\begin{multicols}{3}
\begin{enumerate}[label=\rm(\alph{enumi})]
\item $A(\xi)=A$ if $\xi\in \mathcal{B}_1$.
\item $B(\xi)=B$ if $\xi\in \mathcal{B}_0$.
\item $C(\xi)=0$ if $\xi\in \mathcal{B}_0$.
\item $\tilde{B}(\xi)=0$ if $\xi\in \mathcal{B}_0$.
\item $H(\xi)=0$ if $\xi\in \mathcal{B}_0$.
\item $\tilde{C}(\xi)=0$ if $\xi\in \mathcal{B}_0$.
\item $E(\xi)=0$ if $\xi\in \mathcal{B}_0$.
\item $G(\xi)=0$ if $\xi\in \mathcal{B}_0$.
\end{enumerate}
\end{multicols}
Moreover, if $\xi\in \mathcal{B}_0$ and $g(v)>0$, then 
\begin{enumerate}[label=\rm(\roman{enumi})]
\item the only non-zero $v$-row of $A_g(\xi)$ is in the block $A_{g,v}(\xi)$.
\item the only non-zero $v$-column of $\tilde{A}_g(\xi)$ is in the block $\tilde{A}_{g,v}(\xi)$.
\end{enumerate}
\end{lem}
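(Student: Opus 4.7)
The lemma is essentially a mechanical verification: for each block of the Fox matrix described in Section~\ref{sec:fox}, substitute a character $\xi$ satisfying the appropriate constraints and check the claimed value. The plan is to work through the submatrices in the order they appear, treating (a)--(b) separately from (c)--(h) because the former requires recovering the integer entries of $A$ and $B$, while the latter merely requires exhibiting a vanishing factor in every entry.

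For (a), I would verify each entry of $A(\mathbf{t})$ at $\xi\in\mathcal{B}_1$. An off-diagonal entry $a_{vw}(\mathbf{t})=\sum_{\vec{e}\in E_{v,w}} T_{v,e}\,t_{\vec{e}}$ is a sum of monomials in the $t_w$'s (from $T_{v,e}$) and in $t_e$ or $t_e^{-1}$ (from $t_{\vec{e}}$), all of which are $1$ under $\xi$ since $\xi\in\mathcal{B}_1$; the sum therefore evaluates to $|E_{v,w}|=a_{vw}$. For the diagonal entry $\tfrac{1-t_v^{-\varepsilon(v)}}{t_v-1}$ the only slightly delicate point is that the quotient is actually a Laurent polynomial (the geometric-series identity $\tfrac{t^n-1}{t-1}=1+t+\cdots+t^{n-1}$ applies to either sign of $\varepsilon(v)$), and its value at $t_v=1$ is $\varepsilon(v)=a_{vv}$. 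For (b), the entry $b_{vh}(\mathbf{t})$ is $0$ whenever $h\notin H_v$ (since the relation~\ref{r1} at $v$ does not contain $\gamma_h$), and when $h\in H_v$ it is a monomial in the $t_w$'s and $t_{h'}$'s which evaluates to $1$ on $\mathcal{B}_0$; this reproduces $B$.

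For the claims (c)--(h), each is proven by reading the formulas in Sections~\ref{sec:fox}.3--\ref{sec:tildeAg} and observing that every entry contains at least one factor of the form $(1-t_w)$, $(t_w-1)$, $(1-t_h)$, $(t_h-1)$, or $(t_{v_1(e)}-1)(1-t_{v_2(e)})$, each of which vanishes when evaluated on $\mathcal{B}_0$ since $\xi(t_v)=\xi(t_h)=1$ for all $v\in V$, $h\in H$. No further computation is needed: the vanishing is entry-by-entry. For $C(\xi)$, $\tilde{C}(\xi)$, $H(\xi)$, $E(\xi)$, $\tilde{B}(\xi)$, and $G(\xi)$ the factor is explicit in the description; for $H(\xi)$ it is the diagonal entry $t_{v_h}-1$, and for $G(\xi)$ it is the scalar $t_v-1$ multiplying the identity block.

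Finally, the structural assertions (i) and (ii) follow directly from the definitions in Sections~\ref{sec:Ag} and~\ref{sec:tildeAg}. Relation~\ref{r1} at vertex $v$ involves the genus generators $\aaa_{j,v},\bbb_{j,v}$ only for this particular $v$, so the $v$-row of $A_g(\mathbf{t})$ has non-zero entries only within the block $A_{g,v}(\mathbf{t})$; the restriction $(\xi(t_{j,v}),\xi(s_{j,v}))_{j}\neq\mathbf{1}$ ensures the row is actually non-zero (at least one of the entries $s_{j,v}-1$ or $-(t_{j,v}-1)$ is non-zero). The argument for $\tilde{A}_g$ is symmetric, coming from the fact that the relations~\ref{r4} at vertex $v$ involve $\gamma_v$ only. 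The only step requiring any thought is the evaluation of the diagonal of $A$ in (a); everything else is immediate from the block descriptions.
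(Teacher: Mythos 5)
Your proof is correct and takes essentially the same approach as the paper, which disposes of the lemma in one line by saying it follows immediately from the block-by-block descriptions of the Fox matrix. Your entry-by-entry check (the off-diagonal monomials and the evaluation of the diagonal Laurent polynomial $\frac{1-t_v^{-\varepsilon(v)}}{t_v-1}$ at $t_v=1$ giving $\varepsilon(v)$, the vanishing factors $t_v-1$, $t_h-1$ on $\mathcal{B}_0$, and the non-vanishing of the genus blocks forced by the condition $\left(\xi(t_{j,v}),\xi(s_{j,v})\right)_j\neq\mathbf{1}$) is exactly the intended argument, just written out explicitly.
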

The proof of this lemma follows immediately from the description of these submatrices as given 
in~\ref{sec:Ag} and \ref{sec:tildeAg}.
We may use it to apply elementary operations to $\fox(\xi)$ in order to compute its rank.

\begin{lem}
If $g(v)>0$ for some $v\in V$, then the following properties hold:
\begin{enumerate}
\enet{\rm(\arabic{enumi})} 
\item If $\xi\in \mathcal{B}_0$, then $A_{g,v}(\xi)$ can be transformed into a matrix with only one non-zero element 
(in the $v$-row) using only column operations.
\item If $\xi\in \mathcal{B}_0$, then $\tilde{A}_{g,v}(\xi)$ can be transformed into a matrix with only one non-zero 
element (in the $v$-column) using only row operations.
\item If $\xi\in \mathcal{B}_0$, then $v$-column and the $v$-row of the matrix $A(\xi)$ can be transformed into zero,
using both row and column operations.
\end{enumerate}
\end{lem}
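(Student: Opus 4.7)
\emph{Proof plan.} The plan is to treat parts (1) and (2) as short elementary reductions inside the very sparse blocks $A_{g,v}(\xi)$ and $\tilde A_{g,v}(\xi)$, and then to feed the resulting isolated pivots into row and column operations on the full Fox matrix $\fox(\xi)$ in order to clear the $v$-row and $v$-column of the $A(\xi)$-block for (3).

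For (1), recall from section~\ref{sec:Ag} that $A_{g,v}(\xi)$ is the $n\times 2g(v)$ matrix whose only possibly non-zero row is the $v$-row, with entries $(\xi(s_{j,v})-1,\,-(\xi(t_{j,v})-1))_{j=1}^{g(v)}$. The hypothesis $\xi\in\mathcal{B}_0$ combined with $g(v)>0$ forces the tuple $\bigl(\xi(t_{j,v}),\xi(s_{j,v})\bigr)_{j}$ to differ from $\mathbf{1}$, so at least one of these $2g(v)$ entries is non-zero. Pivoting on such an entry, column operations kill every other entry of the $v$-row without disturbing the zero rows indexed by $w\neq v$. Part (2) is entirely symmetric, using the description in section~\ref{sec:tildeAg} and row operations on the $v$-column of $\tilde A_{g,v}(\xi)$.

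For (3), I would work inside the whole Fox matrix $\fox(\xi)$. By (1), after column operations restricted to the columns indexed by $\{\aaa_{j,v},\bbb_{j,v}\}_{j=1}^{g(v)}$, one obtains a single non-zero entry in the $v$-row of the $A_g$-block. The crucial bookkeeping observation is that this pivot is isolated in its full Fox-matrix column: the only other block meeting that column is $G$, and $G(\xi)=0$ on $\mathcal{B}_0$ by the preceding lemma. Using this isolated pivot, column operations on $\fox(\xi)$ eliminate every entry of the $v$-row of the Fox matrix, in particular the $v$-row of $A(\xi)$. A symmetric argument, starting from the pivot produced by (2) --- which is isolated in its Fox-matrix row since $\tilde B(\xi)$ and $G(\xi)$ both vanish on $\mathcal{B}_0$ --- clears the $v$-column of $A(\xi)$ via row operations.

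The only mildly delicate point is confirming that the two pivots used in (3) really are isolated in their respective column and row of the full Fox matrix; once the vanishing statements of the preceding lemma are invoked, the argument is pure linear algebra with no further computation required.
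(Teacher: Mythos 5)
Your proof is correct and is essentially the argument the paper leaves implicit: the defining condition of $\mathcal{B}_0$ gives a non-zero entry in the $v$-row of $A_{g,v}(\xi)$ (resp.\ the $v$-column of $\tilde{A}_{g,v}(\xi)$), and the resulting pivot is isolated in its full Fox-matrix column (resp.\ row) thanks to the structural zero blocks together with $G(\xi)=0$, so column and row operations clear the $v$-row and $v$-column of $A(\xi)$, exactly as the paper uses in the proof of Theorem~\ref{thm:corank1}. One harmless slip: for the row pivot the isolation comes only from the blocks $0_{2g(\Gamma)\times r}$, $0_{2g(\Gamma)\times b_1(\Gamma)}$ and $G(\xi)=0$; the vanishing of $\tilde{B}(\xi)$ plays no role there, since $\tilde{B}$ lies in a different row block.
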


\begin{asspt}\label{as-nd}
We will assume that $A$ is a negative definite matrix. This is the case when the graph manifold comes from singularity theory. 
\end{asspt}

Let us fix $\varepsilon>0$ such that the matrix $A(\mathbf{t})$ is negative definite for any value of $\mathbf{t}$ such that 
$t_v=1$ and $|t_e-1|\leq\varepsilon$ for any $v\in V$ and $e\in E^*$. 
Such a value for $\varepsilon$ exists, since $A(\mathbf{1})=A$. Consider
$$
\mathcal{B}_\varepsilon:=\{\xi\in \mathcal{B}_0\mid |\xi(t_e)-1|\leq\varepsilon\}.
$$
Note that $\mathcal{B}_1\subset \mathcal{B}_\varepsilon\subset \mathcal{B}_0.$

\begin{thm}\label{thm:corank1}
If $\xi\in \mathcal{B}_\varepsilon$,
then 
$$
\crk(\fox(\xi))=\sum_{v\in V}\max\{0,2 g(v)-1\}+b_1(\Gamma)+r.
$$ 
\end{thm}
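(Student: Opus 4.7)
The plan is to use the block decomposition~\eqref{eq:fox} together with the two preceding lemmas to reduce $\fox(\xi)$ by elementary row and column operations to a form whose rank can be read off. Since $\fox(\xi)$ has size $N=n+r+b_1(\Gamma)+2g(\Gamma)$, the claimed corank formula is equivalent to
\[
\rk(\fox(\xi))=n+\#\{v\in V:g(v)>0\}.
\]

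For $\xi\in\mathcal{B}_\varepsilon\subset\mathcal{B}_0$ the first preceding lemma kills the blocks $C,\tilde B,\tilde C,H,E,G$ and gives $B(\xi)=B$, so only $A(\xi)$, $B$, $A_g(\xi)$, and $\tilde A_g(\xi)$ survive; in particular, the $r+b_1(\Gamma)$ middle rows and the $b_1(\Gamma)$ middle columns vanish identically and drop out of the rank count. For each $v\in V$ with $g(v)>0$, the second preceding lemma allows one to reduce $A_{g,v}(\xi)$, by column operations inside the $2g(v)$ genus columns attached to $v$, to a matrix with a single non-zero entry $\lambda_v\neq 0$ at some position $(v,j_v)$; symmetrically, $\tilde A_{g,v}(\xi)$ is reduced by row operations to a single non-zero entry $\mu_v\neq 0$ at some position $(i_v,v)$. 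This produces, per such vertex, $2g(v)-1$ zero genus columns and $2g(v)-1$ zero genus rows, altogether contributing $\sum_v\max\{0,2g(v)-1\}$ of each kind to the corank.

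After these intra-block reductions, column $j_v$ has support only at row $v$, and row $i_v$ has support only at column $v$. Hence using $\lambda_v$ as a pivot to clear the rest of row $v$ only modifies row $v$ of the $A(\xi)$ and $B$ columns, and using $\mu_v$ as a pivot to clear the rest of column $v$ only modifies column $v$ of the $A(\xi)$ rows; moreover, neither operation can disturb the other pivots, which sit in disjoint ``cross'' positions of the matrix. After all such clearings, the only residual non-zero block, apart from the $2\#\{v:g(v)>0\}$ genus pivots, is the submatrix of $(A(\xi)\mid B)$ obtained by keeping the rows indexed by $W:=\{v\in V:g(v)=0\}$ together with the vertex columns in $W$ and all the arrowhead columns. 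Here the definition of $\mathcal{B}_\varepsilon$ together with Assumption~\ref{as-nd} enters: $A(\xi)$ is negative definite, so its principal submatrix indexed by $W$ is non-singular, hence this residual block has full row rank $|W|$. Summing up, $\rk(\fox(\xi))=|W|+2\#\{v:g(v)>0\}=n+\#\{v:g(v)>0\}$, and the corank formula follows from the identity $\sum_v\max\{0,2g(v)-1\}=2g(\Gamma)-\#\{v:g(v)>0\}$.

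The main subtlety is bookkeeping: one must verify that the successive elementary operations are mutually compatible. This reduces to the observation that, after the intra-block reductions, each pivot column $j_v$ is supported only in the top vertex-row block and each pivot row $i_v$ is supported only in the left vertex-column block, so the associated clearing operations act on disjoint sets of entries and preserve all other pivots. The only other non-trivial ingredient is the non-singularity of the principal minor of $A(\xi)$ indexed by $W$, which is guaranteed precisely by the choice of $\varepsilon$ in the definition of $\mathcal{B}_\varepsilon$.
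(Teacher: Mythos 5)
Your proposal is correct and follows essentially the same route as the paper: both reduce $\fox(\xi)$ via the two lemmas preceding the theorem (vanishing of the blocks $C,\tilde B,H,\tilde C,E,G$, single pivots coming from $A_{g,v}(\xi)$ and $\tilde A_{g,v}(\xi)$ for each positive-genus vertex, and clearing of the corresponding row and column of $A(\xi)$), leaving the nonsingular genus-zero principal block of $A(\xi)$ guaranteed by Assumption~\ref{as-nd} and the choice of $\varepsilon$, so that $\rk\fox(\xi)=n+\#\{v\mid g(v)>0\}$. The paper states this tersely as a block reordering with $A^{g=0}(\xi)$ and two identity blocks, while you spell out the bookkeeping (including the surviving $B$ columns in the genus-zero rows, which do not affect the row rank); the content is the same.
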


\begin{proof}
After a reordering the matrix becomes
\begin{equation*}
\fox(\mathbf{\xi}):=\begin{pmatrix}
A^{g=0}(\xi)&\vert& 0&\vert&0&\vert&0\ \\
\hrulefill&&\hrulefill&&\hrulefill&&\hrulefill\\
0&\vert& 0&\vert&1^{g\neq 0}&\vert&0\ \\
\hrulefill&&\hrulefill&&\hrulefill&&\hrulefill\\
0&\vert&1^{g\neq 0}&\vert&0&\vert&0\ \\
\hrulefill&&\hrulefill&&\hrulefill&&\hrulefill\\
0&\vert&0&\vert&0&\vert&0\ 
\end{pmatrix} 
\end{equation*}
The matrix $A^{g=0}(\xi)$ is a negative definite square matrix whose size is $\#\{v\in V\mid g(v)=0\}$ and
$1^{g\neq 0}$ is the identity matrix with size $\#\{v\in V\mid g(v)>0\}$, i.e., the rank of $\fox(\mathbf{\xi})$ is
\[
\#\{v\in V\mid g(v)=0\}+2\#\{v\in V\mid g(v)>0\}=n+\#\{v\in V\mid g(v)>0\}.
\]
Since the size of $\fox(\mathbf{\xi})$ is $n+r+b_1(\Gamma)+2g(\Gamma)$,
the result follows.
\end{proof}

Assume now that $g(w)>0$ for some $w\in V$. Let us consider $\mathcal B_{\varepsilon,w}\subset \mathbb T$ defined
as $\mathcal{B}_\varepsilon$, with the only exception that $\xi(t_{j,w})=\xi(s_{j,w})=1$, for $1\leq j\leq g(w)$.
The proof of this result is analogous to that of Theorem~\ref{thm:corank1}.

\begin{thm}\label{thm:corankw}
If $\xi\in B_{\varepsilon,w}$ 
then 
$$
\crk(\fox(\xi))=2 g(w)+\sum_{w\neq v\in V}\max\{0,2 g(v)-1\}+b_1(\Gamma)+r.
$$  
\end{thm}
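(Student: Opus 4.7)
The plan is to adapt the block-reduction strategy from the proof of Theorem~\ref{thm:corank1}, with the understanding that the assumption $\xi(t_{j,w})=\xi(s_{j,w})=1$ forces the $w$-indexed genus blocks to degenerate to zero, which in turn prevents the clearing of the $w$-row and $w$-column of $A(\xi)$.

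First, I would observe that $\mathcal{B}_{\varepsilon,w}$ still enforces $\xi(t_v)=1$ for every $v\in V$, $\xi(t_h)=1$ for every $h\in H$, and $|\xi(t_e)-1|\leq\varepsilon$, so the vanishing statements of the previous lemma for $B(\xi)$, $C(\xi)$, $\tilde{B}(\xi)$, $H(\xi)$, $\tilde{C}(\xi)$, $E(\xi)$, $G(\xi)$ continue to hold---those conclusions only relied on the character being trivial on the $t_v$ and $t_h$ coordinates. Moreover, by the choice of $\varepsilon$ the matrix $A(\xi)$ is still negative definite. The only new phenomenon compared to the setting of Theorem~\ref{thm:corank1} is that the submatrix $A_{g,w}(\xi)$ has all entries $(s_{j,w}-1,-(t_{j,w}-1))=(0,0)$, hence $A_{g,w}(\xi)=0$, and likewise $\tilde{A}_{g,w}(\xi)=0$.

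Next I would apply the reduction of the earlier lemma for each $v\in V$ with $g(v)>0$ and $v\neq w$ exactly as before: use column operations on $A_{g,v}(\xi)$ to reduce it to a single non-zero entry in the $v$-row, symmetrically reduce $\tilde{A}_{g,v}(\xi)$ by row operations, and then use these two pivots to clear the $v$-row and $v$-column of $A(\xi)$. This accounts for a contribution of rank~$2$ per such vertex. For $v=w$ no such reduction is possible because the genus blocks have already collapsed, and therefore the $w$-row and $w$-column of $A(\xi)$ survive intact in the reduced matrix.

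What is left is a block decomposition whose rank equals the rank of the principal submatrix of $A(\xi)$ indexed by $\{v\in V\mid g(v)=0\}\cup\{w\}$ plus $2\,\#\{v\in V\mid g(v)>0,\ v\neq w\}$. The cited principal submatrix is negative definite (being a principal submatrix of the negative definite $A(\xi)$), hence it has full rank $\#\{v\in V\mid g(v)=0\}+1$. Subtracting the resulting total rank from the size $n+r+b_1(\Gamma)+2g(\Gamma)$ of $\fox(\xi)$ and rewriting $2g(\Gamma)=2g(w)+\sum_{v\neq w,\,g(v)>0}2g(v)$ gives exactly the claimed formula. I do not foresee a real obstacle beyond careful bookkeeping; the only substantive point is the negative definiteness of the selected principal submatrix, which is automatic.
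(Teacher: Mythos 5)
Your argument is correct and takes essentially the same route as the paper, which merely states that the proof is analogous to Theorem~\ref{thm:corank1}: you carry out exactly that block reduction, observing that the assumption $\xi(t_{j,w})=\xi(s_{j,w})=1$ kills the blocks $A_{g,w}(\xi)$ and $\tilde{A}_{g,w}(\xi)$, so the $w$-row and $w$-column of $A(\xi)$ survive inside the negative definite principal submatrix, and the bookkeeping gives the stated corank. One cosmetic slip: the lemma asserts $B(\xi)=B$ (not that $B(\xi)$ vanishes), but this does not affect your count, since the rows indexed by $\{v\in V\mid g(v)=0\}\cup\{w\}$ already achieve full row rank through the negative definite principal submatrix, so the $H$-columns contribute no additional rank.
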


\begin{cor}\label{cor-nqp}
If 
either $\Gamma$ has two vertices with positive genus or it has one such a vertex
and $b_1(\Gamma)>0$ then $G$ is not quasi-projective.
\end{cor}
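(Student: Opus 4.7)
The plan is to apply Obstruction~\ref{thm-obs} to derive a contradiction from the assumed quasi-projectivity of $G$. Theorems~\ref{thm:corank1} and~\ref{thm:corankw} will supply two nested positive-dimensional sub-tori of $\mathbb T$ on which the dimension of the first local-coefficient cohomology is constant and jumps by exactly one from the larger to the smaller, which is precisely the configuration forbidden by the obstruction.

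I first fix a vertex $w\in V$ with $g(w)>0$ (available by hypothesis) and set
$$
k_0:=\sum_{v\in V}\max\{0,2g(v)-1\}+b_1(\Gamma)+r-1,
$$
noting that $k_0\geq 1$ since $w$ alone contributes $2g(w)-1\geq 1$ to the first sum. Because the Fox matrix~\eqref{eq:fox} is square of size $n+r+b_1(\Gamma)+2g(\Gamma)$, Proposition~\ref{prop:corank} gives $\dim H^1(G;\underline{\mathbb{C}}_\xi)=\crk(\fox(\xi))-1$ for every $\xi\neq\mathbf 1$. Combining this with the evaluations of $\crk(\fox(\xi))$ in Theorems~\ref{thm:corank1} and~\ref{thm:corankw}, I obtain
$$
\mathcal B_\varepsilon\subset\mathcal V_{k_0}(G) \qquad\text{and}\qquad \mathcal B_{\varepsilon,w}\setminus\{\mathbf 1\}\subset\mathcal V_{k_0+1}(G).
$$
I then identify the Zariski closures. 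The sub-tori
$$
\mathcal S:=\{\xi\in\mathbb T\mid \xi(t_v)=1\ \forall v\in V,\ \xi(t_h)=1\ \forall h\in H\},\qquad \mathcal S_w:=\mathcal S\cap\{\xi(t_{j,w})=\xi(s_{j,w})=1\ \forall j\}
$$
are irreducible of dimensions $b_1(\Gamma)+2g(\Gamma)$ and $b_1(\Gamma)+2g(\Gamma)-2g(w)$ respectively. Since $\mathcal B_\varepsilon$ and $\mathcal B_{\varepsilon,w}$ each contain a Euclidean polydisk off a Zariski-closed locus in their respective ambient sub-tori, they are Zariski-dense there, so $\overline{\mathcal B_\varepsilon}=\mathcal S$ and $\overline{\mathcal B_{\varepsilon,w}}=\mathcal S_w$.

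Finally I apply Obstruction~\ref{thm-obs} with $A=\mathcal B_\varepsilon$, $B=\mathcal B_{\varepsilon,w}\setminus\{\mathbf 1\}$, $k=k_0\geq 1$ and $\ell=1$. The inclusion $B\subset\mathcal S_w\subset\mathcal S=\bar A$ is immediate, so if $G$ were quasi-projective the obstruction would force $B$ to be finite. To close the argument I check $\dim\mathcal S_w>0$ under each hypothesis: if a second positive-genus vertex $w'$ exists then $\dim\mathcal S_w\geq 2g(w')\geq 2$; if instead $b_1(\Gamma)>0$ then $\dim\mathcal S_w=b_1(\Gamma)\geq 1$. Either way $B$ is infinite, contradicting the obstruction and proving that $G$ is not quasi-projective. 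The only genuinely delicate step is the Zariski-density identification $\overline{\mathcal B_\varepsilon}=\mathcal S$, which reduces to the standard fact that a non-empty Euclidean-open subset of an irreducible complex algebraic variety is Zariski-dense.
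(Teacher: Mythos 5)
Your argument is essentially the paper's: Proposition~\ref{prop:corank} together with Theorems~\ref{thm:corank1} and~\ref{thm:corankw} places $\mathcal{B}_\varepsilon$ and $\mathcal{B}_{\varepsilon,w}$ in consecutive strata $\mathcal{V}_{k_0}$ and $\mathcal{V}_{k_0+1}$, the second set lies in the closure of the first, and Obstruction~\ref{thm-obs} then forbids quasi-projectivity once the second set is infinite. Your explicit identification of the Zariski closures $\mathcal{S}$, $\mathcal{S}_w$ and the removal of $\mathbf{1}$ from $\mathcal{B}_{\varepsilon,w}$ are harmless refinements of the paper's shorter remarks.

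One step is mis-justified, though easily repaired: the claim $k_0\geq 1$. Saying that $w$ alone contributes $2g(w)-1\geq 1$ to the sum only yields $k_0\geq b_1(\Gamma)+r\geq 0$, because of the trailing $-1$ in the definition of $k_0$. In fact, for a single genus-positive vertex with $b_1(\Gamma)=r=0$ one has $k_0=0$, Obstruction~\ref{thm-obs} (which requires $k>0$) does not apply, and indeed no non-quasi-projectivity should follow in that situation (Seifert links of quasihomogeneous singularities over positive-genus curves have quasi-projective groups). The inequality $k_0\geq 1$ does hold, but only by virtue of the corollary's hypothesis: two genus-positive vertices make the sum at least $2$, while one genus-positive vertex together with $b_1(\Gamma)\geq 1$ makes the sum plus $b_1(\Gamma)$ at least $2$. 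This is precisely how the paper obtains $k>0$, whereas you invoke the hypothesis only later, to ensure that $B$ is infinite; both uses are needed. With that one-line correction your proof matches the paper's.
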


\begin{proof} 
From the definitions note that $\mathcal{B}_{\varepsilon,w}\subset\overline{\mathcal{B}}_\varepsilon$.
Let us assume that $w\in V$ is a vertex of positive genus, in particular $\mathcal{B}_{\varepsilon,w}\neq \emptyset$
and thus infinite. Let
$\xi_w\in\mathcal{B}_{\varepsilon,w}$ and let $\xi\in\mathcal{B}_\varepsilon$. Note that $\xi,\xi_w\neq\mathbf{1}$.
By Proposition~\ref{prop:corank} and Theorems \ref{thm:corank1} and~\ref{thm:corankw} one obtains
\begin{gather*}
\dim_\mathbb{C} H^1(M\setminus L;\underline{\mathbb{\bc}}_\xi)=k
<k+1=
\dim_\mathbb{C} H^1(M\setminus L;\underline{\mathbb{\bc}}_{\xi_w})
\end{gather*}
where
\[
k=\sum_{v\in V}\max\{0,2 g(v)-1\}+b_1(\Gamma)+r-1> 0
\]
by hypothesis. In particular $\mathcal{B}_\varepsilon\subset\mathcal{V}_k$, $k>0$, 
$\mathcal{B}_{\varepsilon,w}\subset\mathcal{V}_{k+1}$ and both sets are infinite, hence by Obstruction~\ref{thm-obs}, 
the group $\pi_1(M\setminus L)$ is not quasi-projective.
\end{proof}

\section{Applications to algebraic links: main theorem}
\label{sec:main}
Assume now $(M,L)=(\mathbb{S}^3,K)$ where $K$ is an algebraic link. The dual graph~$K$ is a tree,
the incidence matrix~$A$ is negative definite and the genus function vanishes. Our purpose is to 
give a result on quasi-projectivity of such groups, that is, on whether or not such local groups 
can be global, i.e., fundamental groups of quasi-projective varieties. The key tool here will be to 
use Corollary~\ref{cor-nqp}. 

\begin{thm}[Papadima~\cite{P07}, Friedl--Suciu~\cite{FS:14}, Biswas--Mj~\cite{BM:15}]
\label{thm:main}
The local group of an algebraic link is quasi-projective if and only if it is the group of a quasihomogeneous singularity.
\end{thm}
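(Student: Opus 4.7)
\medskip

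\emph{Proof plan.} The easy direction $(\Leftarrow)$ is the argument already given in the introduction: if $K$ is the link of a quasihomogeneous singularity $V(F)\subset\CC^N$, then the Milnor representative equals $\CC^N\setminus V(F)=\PP^N\setminus(V(F)\cup H_\infty)$, which is smooth quasi-projective, so $G_K$ is global.

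For the non-trivial direction $(\Rightarrow)$, I argue contrapositively. Suppose $K$ is not topologically equivalent to a link given by a quasihomogeneous equation. By the proposition preceding Remark on trivial and Hopf links, the dual graph $\Gamma$ of the minimal embedded resolution has at least two branching vertices, say $v_1,v_2\in V$. Since $(\mathbb{S}^3,K)$ is a graph-manifold pair with tree plumbing graph, all vertices of $\Gamma$ have genus $0$ and $b_1(\Gamma)=0$, so Corollary~\ref{cor-nqp} does not apply directly to $G_K$. However, quasi-projectivity is inherited by finite-index subgroups (\cite{se:95,gr:57,gr:58}), so it suffices to produce a finite-index subgroup $H\leq G_K$ of the form $\pi_1(M'\setminus L')$ for a graph-manifold pair $(M',L')$ whose plumbing graph $\Gamma'$ satisfies the hypothesis of Corollary~\ref{cor-nqp}, i.e.\ either has two vertices of positive genus, or has one such vertex together with $b_1(\Gamma')>0$.

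The finite-index subgroup will be the kernel of a well-chosen character $\chi:G_K\to\ZZ/N$ for some integer $N\gg 0$. Writing $G_K$ in the presentation of Theorem~\ref{thm:pi1}, such a character corresponds to integers attached to the generators $\gamma_v,\gamma_h$ satisfying the abelianized form of relations \ref{r1}, namely the compatibility equations~\eqref{eq:prod}. Geometrically, the associated cyclic cover $M'\to\mathbb{S}^3\setminus K$ is obtained by branched covering each Seifert piece. The piece over a branching vertex $v$ is an $\mathbb{S}^1$-bundle over a sphere with $\rho(v)\geq 3$ punctures; if $\chi$ restricts non-trivially to at least two of the boundary meridians there, the resulting cover of the base surface has positive genus by Riemann--Hurwitz, since for a cyclic cover of degree $k$ of a $\rho$-punctured sphere branched at $s\geq 2$ of the punctures with non-trivial monodromy, the genus of the base is at least $1+\tfrac{k}{2}(s-2)-\tfrac{1}{2}\sum(\text{ramification corrections})$, which is positive for appropriate $k$ and $s\geq 3$. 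The plan is to use the two distinct branching vertices $v_1,v_2$ together with the multiplicities $m_v$ from~\eqref{eq-euler-mult} to construct such a $\chi$ inducing non-trivial monodromy on at least two boundary components at both $v_1$ and $v_2$, so that $\Gamma'$ contains two vertices of positive genus. (If the arrow structure forces only one branching vertex to lift with positive genus, a second alternative is to arrange the covering so that at least one edge of $\Gamma$ splits into several edges in $\Gamma'$ forming a cycle, giving $b_1(\Gamma')>0$ together with one positive-genus vertex.)

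The main obstacle is existence: one must exhibit, for every non-quasihomogeneous $\Gamma$, a character $\chi$ satisfying~\eqref{eq:prod} and with prescribed non-vanishing on enough meridian generators at both $v_1$ and $v_2$. Since $A$ is negative definite, the system~\eqref{eq:prod} is not obviously solvable modulo $N$ with arbitrary prescribed values on the $t_h$ coordinates; however, the freedom in choosing $N$ large and the fact that branching vertices carry at least $\rho(v_i)-2\geq 1$ ``extra'' incident edges or arrows beyond a generic linear path should allow enough parameters. Once $\chi$ is constructed, the presentation in Theorem~\ref{thm:pi1} applied to the covering plumbing pair $(M',L')$ gives a finite-index subgroup $H$ of $G_K$ to which Corollary~\ref{cor-nqp} applies, whence $H$, and therefore $G_K$, is not quasi-projective.
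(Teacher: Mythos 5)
Your overall strategy is the paper's: pass to a finite-index subgroup realized as the fundamental group of a finite cyclic cover of the link complement and then invoke Corollary~\ref{cor-nqp}. But what you leave open is exactly the heart of the matter, and you say so yourself: you never exhibit the character $\chi$, you only assert that the constraints \eqref{eq:prod} ``should allow enough parameters''. That existence step is not a routine verification, and the paper does not solve it by a counting argument at all; it makes a canonical choice. It takes $e=\lcm\{m_{v,f}\mid v\in V\}$ and the $e$-fold cyclic cover determined by $f$ itself (the total-multiplicity homomorphism), realizes it algebro-geometrically as the normalization of the pullback of the minimal embedded resolution under $t\mapsto t^e$, and then reads off the covering plumbing graph from the ($\mathbb{Q}$-normal crossing) preimage of the exceptional divisor. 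With this choice, Lemma~\ref{lemma:cubierta} shows that over \emph{every} branching component $D\cong\mathbb{P}^1$ of multiplicity $m$ with neighbor multiplicities $m'_1,\dots,m'_r$ and $d=\gcd(m,m'_1,\dots,m'_r)$, the preimage consists of $\frac{m}{d}$-fold cyclic covers of $\mathbb{P}^1$ branched at those intersection points where $\frac{m'_i}{d}\not\equiv 0 \bmod \frac{m}{d}$, and, using the auxiliary blow-up model in which the local multiplicities satisfy $m_1+\dots+m_r=m$ and $\frac{m'_i}{d}\equiv\frac{m_i}{d}$, that there are more than two such points; since non-quasihomogeneous means at least two branching vertices, the cover has two positive-genus vertices and Corollary~\ref{cor-nqp} applies. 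Your proposal contains no substitute for this multiplicity analysis.

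A second, independent flaw is your genus criterion. Nontrivial monodromy of $\chi$ on just two boundary meridians of a $\rho$-punctured sphere does \emph{not} force positive genus: for instance the degree-$2$ cyclic cover of a $3$-punctured sphere with monodromies $(1,1,0)$ is again planar (a cyclic automorphism of $\mathbb{P}^1$ has exactly two fixed points, so any genus-$0$ cyclic cover of $\mathbb{P}^1$ is branched at at most two points). What one must show is that the closed-up cover is branched at \emph{at least three} points, which is precisely the content of the paper's argument with the multiplicities; your Riemann--Hurwitz inequality is stated too loosely to deliver this, and your fallback (``arrange a cycle in $\Gamma'$'') is likewise only a hope, not a construction. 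So the proposal identifies the right reduction (finite-index subgroups inherit quasi-projectivity, plus Corollary~\ref{cor-nqp}) but is missing the constructive core: the specific cover and the proof that branching vertices acquire positive genus in it.
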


\begin{proof}
Since quasi-projectivity is inherited by finite-index subgroups, we will seek for finite-index
subgroups satisfying the hypotheses of Corollary~\ref{cor-nqp}.

In order to do so consider $L$ an algebraic link associated with $(V(f),0)\subset (\CC^2,0)$ and $\Gamma$ its 
associated graph. Let~$e:=\lcm\{m_{v,f} \mid v\in V\}$ be the least common multiple of the multiplicities 
$m_{v,f}$ at all vertices in~$\Gamma$ and $(M,\tilde L_e)$ the $e$-fold cyclic cover of $(\mathbb{S}^3,L)$ 
ramified along all the components of $L$. The result is a consequence of the following Lemma.
\end{proof}

\begin{lem}
\label{lemma:cubierta}
Assume $L$ is an algebraic link which is not of quasihomogeneous type. Then $(M,\tilde L_e)$ as defined above 
is a graph manifold satisfying the hypothesis of Corollary{\rm~\ref{cor-nqp}}.
\end{lem}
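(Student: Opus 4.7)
\emph{Plan.} The strategy is to show that the plumbing graph $\tilde\Gamma$ of $(M,\tilde L_e)$ has at least two vertices of positive genus, so that Corollary~\ref{cor-nqp} applies. Since $L$ is not quasihomogeneous, $\Gamma$ has at least two branching vertices (by the characterisation of quasihomogeneous links in \S\ref{sec-links}), so it suffices to show that each branching vertex $v\in\Gamma$ contributes at least one positive-genus vertex to $\tilde\Gamma$.

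\emph{The covering character.} The $e$-fold cyclic cover $(M,\tilde L_e)\to(\mathbb{S}^3,L)$ ramified along every component of $L$ is encoded by the character $\xi\colon\pi_1(\mathbb{S}^3\setminus L)\to \mathbb{Z}/e\mathbb{Z}$ sending every meridian $\mu_i$ to $1$. Since the fiber class $\gamma_v$ abelianizes to $\sum_i m_{v,f_i}\,\mu_i$, one has $\xi(\gamma_v)=m_{v,f}\pmod e$, of order $e/m_{v,f}$ in $\mathbb{Z}/e\mathbb{Z}$. Writing $I_v\le\mathbb{Z}/e\mathbb{Z}$ for the image of $\xi|_{\pi_1(N_v)}$, the preimage $\tilde N_v$ decomposes into $e/|I_v|$ connected components, each a Seifert piece over a base $\tilde B_v^{(j)}$ covering $B_v$ with degree $d_v=|I_v|\,m_{v,f}/e$.

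\emph{Riemann--Hurwitz at a branching vertex.} Fix a branching vertex $v\in\Gamma$, so that $B_v$ is a sphere minus $\rho(v)\ge 3$ open disks. Since the cover $\tilde B_v^{(j)}\to B_v$ is unramified of degree $d_v$, Riemann--Hurwitz gives
\begin{equation*}
2\,g(\tilde B_v^{(j)})=d_v\bigl(\rho(v)-2\bigr)+2-c^{(j)},
\end{equation*}
where $c^{(j)}$ counts the boundary circles of $\tilde B_v^{(j)}$. Each boundary of $B_v$ (around a neighbouring vertex $w$ or an arrow) lifts to $\gcd(m_{w,f},m_{v,f})/\delta_v$ (respectively one) boundary circles of $\tilde B_v^{(j)}$, where $\delta_v=[\mathbb{Z}/e\mathbb{Z}:I_v]$. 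Combining $\gcd(a,b)\le a$ with the Euler relation~\eqref{eq-euler-mult} then yields $c^{(j)}\le d_v(\rho(v)-2)$, and hence $g(\tilde B_v^{(j)})\ge 1$.

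\emph{Main obstacle and conclusion.} Applying the construction at two distinct branching vertices of $\Gamma$ produces two positive-genus vertices in $\tilde\Gamma$, whence Corollary~\ref{cor-nqp} implies that $\pi_1(M\setminus\tilde L_e)$ is not quasi-projective; being of finite index in $G_K$, this forces $G_K$ itself not to be quasi-projective. The principal obstacle is the boundary-count inequality $c^{(j)}\le d_v(\rho(v)-2)$, which rests on a delicate interaction between the $\gcd$ bounds and~\eqref{eq-euler-mult}, with additional case analysis needed when $\delta_v>1$ and $\tilde N_v$ decomposes into several Seifert pieces whose bases must be treated individually.
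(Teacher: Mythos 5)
Your overall architecture is the same as the paper's: since $L$ is not quasihomogeneous there are at least two branching vertices, and it suffices to show that each of them produces a positive-genus vertex in the covering graph; the covering data, the unramified base covers $\tilde B_v^{(j)}\to B_v$ and the Riemann--Hurwitz bookkeeping are set up correctly. The gap is at the one step that carries all the content: the inequality $c^{(j)}\le d_v(\rho(v)-2)$, which is \emph{exactly equivalent} to $g(\tilde B_v^{(j)})\ge 1$, and which you justify by ``$\gcd(a,b)\le a$ combined with \eqref{eq-euler-mult}''. That derivation does not work. Bounding each boundary count by $\gcd(m_{v,f},m_{w,f})\le m_{w,f}$ and summing via \eqref{eq-euler-mult} only gives $c^{(j)}\le(\sum_w m_{w,f}+b_v)/\delta_v=-e_v\,m_{v,f}/\delta_v=|e_v|\,d_v$, which is weaker than $d_v(\rho(v)-2)$ whenever $-e_v>\rho(v)-2$; the bound $\gcd\le m_{v,f}$ only gives $\rho(v)\,d_v$. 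For instance at the vertex $v_2$ of Example~\ref{ex-acampo0} ($e_{v_2}=-3$, $\rho=3$, $m=6$, neighbour multiplicities $3,14,1$, $\delta_{v_2}=1$) your bound yields $18$ while the needed bound is $6$ (the true value is $6$, so the inequality holds, but not for the reason you give).

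What is really needed is finer, and it is precisely what the paper supplies. First, one must know that every neighbour is a genuine branch point of the associated closed cyclic cover, i.e.\ $\gcd(m_{v,f},m_{w,f})<m_{v,f}$ (otherwise, e.g.\ if $\delta_v=m_{v,f}$, the base cover is trivial and the genus is $0$); the paper proves this by a separate local argument (blowing down to the point $P$ whose blow-up creates $D$, so that the tangent-cone multiplicities satisfy $m_1+\cdots+m_r=m$ with $0<m_i<m$ and $m'_i\equiv m_i \bmod m$), and your proposal has no counterpart of this. Second, even granting genuineness, Riemann--Hurwitz numerics alone still allow genus $0$ when $\rho(v)=3$: a degree-$6$ cover branched at three points with local orders $2,2,3$ has $\chi=2$. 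This is excluded only because the cover is \emph{cyclic}: the local monodromies $m_{w,f}\bmod m_{v,f}$ must sum to zero (this is where \eqref{eq-euler-mult} genuinely enters, in its mod-$m_{v,f}$ form), equivalently a genus-zero cyclic cover of $\mathbb{P}^1$ is branched over at most two points. So the ``main obstacle'' you flag is not a technical case analysis about $\delta_v>1$; it is that your key inequality is unproved, and proving it requires the two ingredients above, i.e.\ essentially the paper's argument.
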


\begin{proof}
The way to construct the graph of $(M,\tilde L_e)$ is as follows. Let $f(x,y)=0$ be the equation
of a local germ singularity yielding~$L$. Let $X_\varepsilon\subset (\CC^2,0)$ be a closed Milnor ball and consider
the following diagram, where $\sigma:Y_\varepsilon\to X_\varepsilon$ is the minimal embedded resolution,
$\tilde{f}:=f\circ\sigma$, $\tilde{Y}_\varepsilon$ is the pull-back, $\hat{Y}_\varepsilon$ is its
normalization and $Z_\varepsilon$ is its resolution:
\[
\begin{tikzcd}
Z_\varepsilon\ar[r]&\hat{Y}_\varepsilon\ar[r]&\tilde{Y}_\varepsilon\ar[r]\ar[d]&Y_\varepsilon\ar[d,"\tilde{f}"]\\
&&\mathbb{C}\ar[r,"t\mapsto t^e"]&\mathbb{C}
\end{tikzcd}
\]
Note that the manifold $M$ can be seen as the boundary of either one of the manifolds 
$\tilde{Y}_\varepsilon$, $\hat{Y}_\varepsilon$ or $Z_\varepsilon$.
Also, the preimage of $0$ in $Z_\varepsilon$ is a normal crossing divisor, whose dual graph is the plumbing graph of~$M$.
The arrows determining $\tilde{L}_e$ correspond to the preimage of the strict transform of $f^{-1}(0)$.

Note that $\hat{Y}_\varepsilon$ is not a complex analytic manifold since it admits quotient singularities
on the preimages of the double points of $\tilde{f}^{-1}(0)$; the preimage of $\tilde{f}^{-1}(0)$
is a $\mathbb{Q}$-normal crossing divisor, see~\cite{amo:jos}, admitting also a dual graph.
One can obtain the graph of~$M$ from this dual graph by replacing some edges by linear subgraphs,
corresponding to the Jung--Hirzebruch resolution of the quotient singularities. In particular,
the possible positive genus components of the divisor in $Z_\varepsilon$ appear already in $\hat{Y}_\varepsilon$.

Let $D\cong\mathbb{P}^1$ be a branching component of $\tilde{f}^{-1}(0)$ with multiplicity~$m$. It has $r$ neighbors 
with multiplicities $m'_1,\dots,m'_r$ ($r\geq 3$). Let $d:=\gcd(m,m'_1,\dots,m'_r)$. Then,
the preimage of $D$ in $\hat{Y}_\varepsilon$ is the union of $d$ connected components, and each one of them 
is an $\frac{m}{d}$-fold cyclic cover ramified at $r$ points with ramification index~$\frac{m'_i}{d}\mod\frac{m}{d}$.
These points will be actual ramification points only if $\frac{m'_i}{d}\not\equiv 0\mod\frac{m}{d}$.

In fact, one does not need to finish constructing the embedded resolution to study this cover. 
Let $\sigma_D':X_D'\to\mathbb{B}_\varepsilon$ be the composition of blowing-ups such that there is $P\in X_D'$ 
for which $D$ is the strict transform of the exceptional component of the blowing-up of~$P$. The total transform 
$(f\circ\sigma_D')^*(0)$ is a non-reduced curve (multiple components come from the exceptional divisor); its germ 
at~$P$ has~$r$ tangent directions with multiplicities~$m_1,\dots,m_r$. We have that $m_1+\dots+m_r=m$ and 
$\frac{m'_i}{d}\equiv\frac{m_i}{d}\mod\frac{m}{d}$.

Hence, these cyclic covers are actually ramified over more than two points. Such a cover is a positive genus surface.
Since we have at least two branch points, the statement holds.
\end{proof}

\section{Examples}
\label{sec:examples}
The purpose of this section is to discus some characteristic examples to visualize the different phenomena 
described in the proof of the main theorem.

\begin{example}
Let us consider the singularity provided in Example~\ref{ex-acampo0}. In this case the $\lcm$ of the multiplicities is $42$. 
After performing a 6-fold cyclic cover the following graph is obtained (see left-hand side of Figure~\ref{fig:6fold}),
which can be simplified without changing the graph manifold (see right-hand side of Figure~\ref{fig:6fold}).
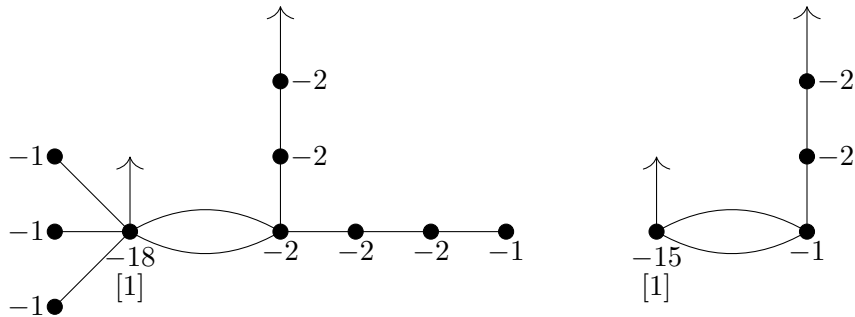
\begin{figure}[ht]
\begin{center}
\begin{tikzpicture}[vertice/.style={draw,circle,fill,minimum size=0.2cm,inner sep=0}]
\coordinate (v1) at (1,0);
\coordinate (v0) at ($-1*(v1)$);
\coordinate (u) at (0,1);
\node[vertice] at (v0) {};
\node[vertice] at (v1) {};
\draw[style={bend left}] (v0) to (v1);
\draw[style={bend right}] (v0) to (v1);
\node[below=2pt] at (v0) {$-18$};
\node[below=12pt] at (v0) {$[1]$};
\node[below] at (v1) {$-2$};
\node[vertice]  at ($2*(v1)$) {};
\node[below]  at ($2*(v1)$) {$-2$};
\node[vertice]  at ($3*(v1)$) {};
\node[below]  at ($3*(v1)$) {$-2$};
\node[vertice]  at ($4*(v1)$) {};
\node[below]  at ($4*(v1)$) {$-1$};
\draw (v1)--($4*(v1)$);

\node[vertice] at ($(v1)+(u)$) {};
\node[right] at ($(v1)+(u)$) {$-2$};
\node[vertice] at ($(v1)+2*(u)$) {};
\node[right] at ($(v1)+2*(u)$) {$-2$};
\draw[-{[scale=2]>}] (v1)--($(v1)+3*(u)$);
\draw[-{[scale=2]>}] (v0)--($(v0)+(u)$);

\node[vertice] at ($2*(v0)+(u)$) {};
\node[left] at ($2*(v0)+(u)$) {$-1$};
\draw (v0)--($2*(v0)+(u)$);
\node[vertice] at ($2*(v0)$) {};
\node[left] at ($2*(v0)$) {$-1$};
\draw (v0)--($2*(v0)$);
\node[vertice] at ($2*(v0)-(u)$) {};
\node[left] at ($2*(v0)-(u)$) {$-1$};
\draw (v0)--($2*(v0)-(u)$);

\begin{scope}[xshift=7cm]
\coordinate (v1) at (1,0);
\coordinate (v0) at ($-1*(v1)$);
\coordinate (u) at (0,1);
\node[vertice] at (v0) {};
\node[vertice] at (v1) {};
\draw[style={bend left}] (v0) to (v1);
\draw[style={bend right}] (v0) to (v1);
\node[below=2pt] at (v0) {$-15$};
\node[below=12pt] at (v0) {$[1]$};
\node[below] at (v1) {$-1$};

\node[vertice] at ($(v1)+(u)$) {};
\node[right] at ($(v1)+(u)$) {$-2$};
\node[vertice] at ($(v1)+2*(u)$) {};
\node[right] at ($(v1)+2*(u)$) {$-2$};
\draw[-{[scale=2]>}] (v1)--($(v1)+3*(u)$);
\draw[-{[scale=2]>}] (v0)--($(v0)+(u)$);
\end{scope}
\end{tikzpicture}
\caption{$6$-fold cyclic cover}
\label{fig:6fold}
\end{center}
\end{figure}
From this 6-fold cyclic cover it can already be deduced that the group is not quasi-projective, 
since its graph contains a cycle and a branched vertex.
\end{example}

\subsection{Characteristic varieties and monodromy for generalized A'Campo's links}
\mbox{}

Our purpose is to generalize the algebraic link presented in Example~\ref{ex-acampo0} 
in order to compare the behavior of characteristic varieties of their fundamental groups and the monodromy of their 
corresponding singularity. Let $f(x,y):=(y^q+x^p)(y^s+x^r)$ where $\gcd(p,q)=\gcd(r,s)=1$
and $\frac{p}{q}<\frac{r}{s}$. Since these singularities have long dual graphs for the resolution, we can use 
Eisenbud--Neumann results~\cite{en} to give a presentation of their fundamental groups:
\[
G:=\langle
\mu_x,\mu_y,\mu_z\mid
[\mu_z,\mu_x^s]=1,\mu_x^{s(r q-p s)}\mu_y^p\mu_z^{a r-b s}=1
\rangle
\]
where $b q-a p=1$. Let us denote $\alpha:=r q-p s$, $\beta=a r-b s$;
we choose $a,b\geq 0$ such that $\beta\geq 0$. The group algebra of $G/G'$ over the complex numbers
is the quotient $\bc[t^{\pm 1}_x,t^{\pm 1}_y,t^{\pm 1}_z]/(t_x^{s\alpha} t_y^p t_z^{\beta}-1)$. The Fox matrix is
\[
\begin{pmatrix}
(t_z-1)\dfrac{1-t_x^s}{1-t_x}&0&1-t_x^s\\
&&\\
\dfrac{1-t_x^{s\alpha}}{1-t_x}&t_x^{s\alpha}\dfrac{1-t_y^p}{1-t_y}&
t_x^{s\alpha} t_y^p \dfrac{1-t_z^\beta}{1-t_z}
\end{pmatrix}.
\]
The Fitting ideal generated by the $2$-minors of this matrix is 
\[
\dfrac{1-t_x^s}{1-t_x}
\left\langle
(t_z-1)\dfrac{1-t_y^p}{1-t_y},(1-t_x)\dfrac{1-t_y^p}{1-t_y},1-t_x^{s\alpha} t_z^\beta
\right\rangle.
\]
We obtain the following sets of irreducible components:
\begin{itemize}
 \item For each $\zeta_s\neq 1$, with $\zeta_s^s=1$ there is a component $\{t_x=\zeta_s,t_y^p t_z^{\beta}=1\}$.
 \item For each $\zeta_p\neq 1$, with $\zeta_p^p=1$ there is a component $\{t_y=\zeta_p,t_x^{s\alpha} t_z^{\beta}=1\}$.
\end{itemize}

The intersections of these components are 
\[
t_x=\zeta_s,\quad t_y=\zeta_p,\quad t_z^{\beta}=1.
\]
Let us consider now the Fitting ideals generated by $1$-minors.
Their elements are as above, excluding the cases when $t_z=1$.

\begin{example}
Let us consider $f_{a,b}(x,y)=(y-x^2)^a (y^2-x^5)^b$, where $\gcd(a,b)=1$.
Let $M$ be the complement of the algebraic link. The covering defining this monodromy
is determined by $\pi_1(M)\to\bz$ where $x\mapsto 2a+b$ and $y\mapsto 5a+2b$.
The monodromy is semisimple if and only if either $a$ or $b$ are even. Hence, the subtorus
defined by the covering passes through the intersection of two components of $\Char_1(G)$
if and only if the monodromy is not semisimple.
\end{example}

\begin{example}
Let us consider $f_{a,b}(x,y)=(y-x^3)^a (y^2-x^7)^b$, where $\gcd(a,b)=1$.
The reduced singularity is semisimple; in general, the monodromy is not semisimple
if and only if $a$ is even and $b\equiv 0\bmod 3$.
Let $M$ be the complement of the algebraic link, whose dual graph is shown in Figure~\ref{fig:1327}. 
The covering defining this monodromy is determined by $\pi_1(M)\to\bz$ where 
$x\mapsto 2a+b$ and $y\mapsto 7a+3b$. Hence, the subtorus
defined by the covering passes through the intersection of two components of $\Char_1(G)$
if and only if the monodromy is not semisimple.

\begin{figure}[ht]
\begin{center}
\begin{tikzpicture}[xscale=1.5]
\node[draw,circle,fill] (v) at (-3,0) {};
\node[draw,circle,fill] (v0) at (-1.5,0) {};
\node[draw,circle,fill] (v1) at (0,0) {};
\node[draw,circle,fill] (v2) at (1.5,0) {};
\node[draw,circle,fill] (v3) at (3,0) {};
\node[rotate=90](flecha1) at (0.0025,1.4) {$\boldsymbol{>}$};
\node[rotate=90](flecha1) at (1.5025,1.4) {$\boldsymbol{>}$};
\draw (v) -- (v3);
\draw (0,0) -- (0,1.5);
\draw (1.5,0) -- (1.5,1.5);
\node at (-3.1,.5) {$v_1$};
\node at (-3.1,-.5) {$-2$};
\node at (-3.1,-1) {$\langle a+2b\rangle$};
\node at (-1.6,.5) {$v_2$};
\node at (-1.6,-.5) {$-2$};
\node at (-1.6,-1) {$\langle 2(a+2b)\rangle$};
\node at (-.4,.5) {$v_3$};
\node at (-0.1,-.5) {$-3$};
\node at (-0.1,-1) {$\langle 3(a+2b)\rangle$};
\node at (2.9,.5) {$v_4$};
\node at (2.9,-.5) {$-2$};
\node at (2.9,-1) {$\langle 3a+7b\rangle$};
\node at (1.8,.5) {$v_5$};
\node at (1.4,-.5) {$-1$};
\node at (1.4,-1) {$\langle 2(3a+7b)\rangle$};
\end{tikzpicture}
\caption{Dual graph of $f_{a,b}$}
\label{fig:1327}
\end{center}
\end{figure}
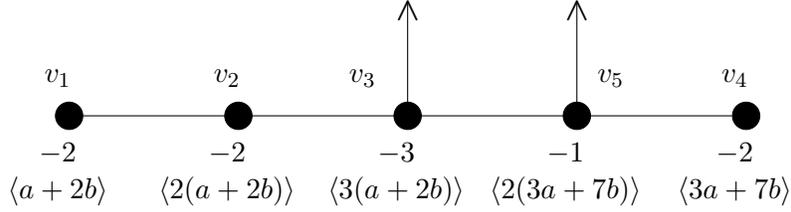

\end{example}

\begin{example}
As discussed in Example~\ref{ejm:notorsion}, the irreducible components of the characteristic variety of a general 
graph manifold could be subtori translated by \emph{non-torsion} elements. The following graph manifold provides 
an example whose characteristic varieties are not even translated tori (see Figure~\ref{fig:nosubtori}).

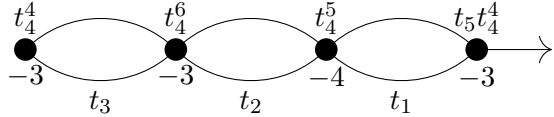
\begin{figure}[ht]
\begin{center}
\begin{tikzpicture}[]
\coordinate (v) at (-4,0);
\fill[black] (v) circle [radius=.15];
\node[below=2pt] at (v) {$-3$};
\node[above=2pt] at (v) {$t_4^4$};

\coordinate (v0) at (-2,0);
\fill[black] (v0) circle [radius=.15];
\node[below=2pt] at (v0) {$-3$};
\node[above=2pt] at (v0) {$t_4^6$};

\coordinate (v1) at (0,0);
\fill[black] (v1) circle [radius=.15];
\node[below=3pt] at (v1) {$-4$};
\node[above=2pt] at (v1) {$t_4^5$};

\coordinate (v2) at (2,0);
\fill[black] (v2) circle [radius=.15];
\node[below=3pt] at (v2) {$-3$};
\node[above=2pt] at (v2) {$t_5 t_4^4$};

\draw (v) to[out=45, in=135] (v0);
\draw (v)  to [out=-45, in=-135] node[pos=.5,below] {$t_3$} (v0) ;
\draw (v0) to[out=45, in=135]  (v1);
\draw (v0) to[out=-45, in=-135] node[pos=.5,below] {$t_2$} (v1);
\draw (v1) to[out=45, in=135] (v2);
\draw (v1) to[out=-45, in=-135] node[pos=.5,below] {$t_1$} (v2);

\draw[-{[scale=2]>}] (v2)--($(v2)+(1,0)$);
\end{tikzpicture}
\caption{Graph link admitting a no subtori component}
\label{fig:nosubtori}
\end{center}
\end{figure}

The torus $\bt_G$ is the maximal spectrum of $\CC[\bz^4\times\bz/2]$. Let us denote by $t_1,t_2,t_3,t_4$
the free components and by $t_5$ the torsion component satisfying $t_5^2=1$. The characteristic varieties
$\Char_1(G)$ and $\Char_2(G)$ have torsion translated subtori as components (of dimension~$3$). 
The variety $\Char_3(G)$ has three components. One of them is the subtorus $t_5=t_4=1$; the other 
ones are defined by $t_4=1$, $t_5=-1$ and $t_3^2 - 7 t_3 + 1=0$, i.e., they are non-torsion translated subtori.
Finally, $\Char_4(G)$ has three 1-dimensional components. Two of them are non-torsion translated subtori:
$t_3^2 - 7 t_3 + 1= t_4 - 1= t_2 + 1= t_5 - 1=0$. The third one, defined by
\[
t_3^2 t_2 + \frac{3}{4} t_3 t_2^2 - \frac{11}{2} t_3 t_2 +\frac{3}{4} t_3 + t_2= t_4 - 1= t_1 + 1= t_5 - 1=0
\]
is not a translated subtorus.
\end{example}

\bibliographystyle{amsplain}
\providecommand{\bysame}{\leavevmode\hbox to3em{\hrulefill}\thinspace}
\providecommand{\MR}{\relax\ifhmode\unskip\space\fi MR }
\providecommand{\MRhref}[2]{%
  \href{http://www.ams.org/mathscinet-getitem?mr=#1}{#2}
}
\providecommand{\href}[2]{#2}

\end{document}